\documentclass{amsart}
\usepackage{hyperref}
\newtheorem{theorem}{Theorem}[section]
\newtheorem{proposition}{Proposition}[section]

\theoremstyle{definition}
\newtheorem{definition}[theorem]{Definition}
\newtheorem{example}[theorem]{Example}
\newtheorem{problem}[theorem]{Problem}

\theoremstyle{remark}
\newtheorem{remark}[theorem]{Remark}
\numberwithin{equation}{section}

\newcommand{\cB}{{\mathcal B}}

\newcommand{\cD}{{\mathcal D}}
\newcommand{\cE}{{\mathcal E}}
\newcommand{\cF}{{\mathcal F}}
\newcommand{\cG}{{\mathcal G}}

\newcommand{\cK}{{\mathcal K}}

\newcommand{\cM}{{\mathcal M}}
\newcommand{\cN}{{\mathcal N}}

\newcommand{\cU}{{\mathcal U}}
\newcommand{\cV}{{\mathcal V}}
\newcommand{\cW}{{\mathcal W}}
\newcommand{\cX}{{\mathcal X}}

\newcommand{\bD}{{\mathbb{D}}}
\newcommand{\bC}{{\mathbb{C}}}

\newcommand{\sbm}[1]{\left[\begin{smallmatrix} #1
		\end{smallmatrix}\right]}

\usepackage{color, bm}
\begin{document}

\title{On de Branges--Rovnyak Kernels Admitting a Complete Pick Factor}

\author{Haripada Sau}
\address{Department of Mathematics, Indian Institute of Science Education and Research Pune, Maharashtra 411008}
\email{hsau@iiserpune.ac.in}
\thanks{The author was supported by the Anusandhan National Research Foundation (ANRF), Government of India, under the Mathematical Research Impact-Centric Support (MATRICS) scheme (Grant No. ANRF/ARGM/2025/000573/MTR)}


\keywords{de Branges-Rovnyak kernel, sub Bergman kernel, complete Pick factor, contractive multiplier, Szeg\"o kernel.}
\subjclass[2020]{Primary 46E22; Secondary 47B32, 30H10, 30H45}



\begin{abstract}
For a contractive multiplier $\varphi$ in the multiplier algebra $M(k)$ of a kernel $k$, the associated de Branges--Rovnyak kernel is given by $k^\varphi(x,y) = (1-\varphi(x)\overline{\varphi(y)})k(x,y)$. Motivated by recent developments clarifying the structural and geometric features of reproducing kernel Hilbert spaces associated with kernels admitting a complete Pick factor, we investigate the precise conditions for a general de Branges-Rovnyak kernel to admit a complete Pick factor, thereby extending the framework introduced by Ahmed, Das and Panja (\textit{J. Geom. Anal.}, 2025). In this paper, we characterize the existence of a complete Pick factor for $k^\varphi$ across a broad class of base kernels encompassing both complete Pick and non-complete Pick architectures (such as the Szegő kernel on the polydisk). Our first characterization is formulated in terms of operator-valued holomorphic functions satisfying an interpolation condition. We also show that $k^\varphi$ admits a complete Pick factor if and only if $(\widetilde k)^\varphi$ is itself a complete Pick kernel, where  
$\widetilde k$ is an auxiliary kernel constructed from the given data. Notably, our main result is completely new even when specialized to the classical Szeg\"o kernel of the unit disk. As an application of our framework, we obtain a structural insight into a classical theorem of Chu (\textit{J. Funct. Anal.}, 2020) and provide an alternative proof of a recent result by Luo and Zhu (\textit{Canad. J. Math.}, 2024). The results are illustrated by concrete examples.
\end{abstract}

\maketitle

\section{Introduction}

\subsection{Background}
A \textit{kernel} $k$ on a set $X$ is a positive semi-definite function $k:X\times X\to\bC$, denoted by $k\succeq 0$, if for each finite set of points $\{x_j\}$ in $X$, the matrix $(k(x_i,x_j))$ is positive semi-definite (i.e., $(k(x_i,x_j))\succeq 0$). A kernel is said to be \textit{normalized} at a point $x_0\in X$ if $k(x,x_0)=1$ for every $x\in X$. We do not require the kernel function to be holomorphic in the first variable and anti-holomorphic in the second variable; however, when this property holds, we shall explicitly refer to them as \textit{holomorphic kernels}. 

We denote by $H(k)$ the reproducing kernel Hilbert space (RKHS) associated with a kernel $k$, and by $M(k)$ the corresponding multiplier algebra endowed with the operator norm. It is well known that when $k$ is holomorphic, both $H(k)$ and $M(k)$ comprise spaces of holomorphic functions. We refer the reader to the seminal paper by Aronszajn \cite{Aronszajn} and the monograph by Paulsen and Raghupathi \cite{PaulRaghu} for the foundational theory of reproducing kernel Hilbert spaces.

Ever since their inception by Aronszajn \cite{Aronszajn} (see also the pioneering monograph by Bergman \cite{Bergman}), reproducing kernel Hilbert spaces have remained a fertile ground for researchers working at the vibrant interface of holomorphic function theory and operator theory. Beyond the classical finite-dimensional setting, the landscape of infinite-dimensional RKHSs comprises such canonical structures as the Hardy space of the unit disk, the Drury–Arveson space, the Dirichlet space, and the space of Dirichlet series. Recent advances in the field have elegantly unified these disparate spaces under the singular umbrella of complete Pick spaces. 

The original definition of complete Pick kernels was formulated by Agler and McCarthy \cite{AM_JFA} within the framework of the Pick interpolation problem. We also refer the reader to an earlier paper by McCullough \cite{McCullough1992} for work on kernels on the unit disk $\bD$ for which a version of Carath\'eodory interpolation holds. An elegant characterization of complete Pick kernels was established by Agler and McCarthy in \cite[Theorem 3.1]{AM_JFA} (see also related work by Quiggin \cite{Quiggin1, Quiggin2} and McCullough \cite{McCullough}). This characterization asserts that \textit{a non-vanishing, normalized kernel is a complete Pick kernel if and only if $1-1/k\succeq 0$, or equivalently, if there exists a Hilbert space $\mathcal U$ and a function $u:X\to\cU$ such that for every $x,y\in X$,}
\begin{align}\label{CompletePick}
k(x,y)=\frac{1}{1-\langle u(x),u(y)\rangle_\cU}.
\end{align}
In view of this identification, we define complete Pick kernels as follows:

\begin{definition}
A non-vanishing, normalized kernel $k$ is a \textit{complete Pick kernel} if it is of the form \eqref{CompletePick} for some function $u:X\to\cU$.
\end{definition} 

It is well known that a function $\varphi$ belongs to the unit ball of $M(k)$ if and only if 
\begin{align}\label{Intro_kphi}
k^\varphi(x,y):=(1-\varphi(x)\overline{\varphi(y)})k(x,y)
\end{align}
is itself a kernel. While we reserve the notation $k$ for a general kernel, the symbols $\mathfrak{s}$ and $\mathfrak{s}_2$ are dedicated to the Szeg\"o kernel $\mathfrak{s}(z,w):=(1-z\overline{w})^{-1}$ and the Bergman kernel $\mathfrak{s}_2(z,w):=(1-z\overline{w})^{-2}$ of the unit disk $\bD$, respectively. It is a classical result that $M(\mathfrak{s})=M(\mathfrak{s}_2)=H^\infty$, the algebra of bounded analytic functions on $\bD$. For a contractive $\varphi\in H^\infty$, the associated kernels $\mathfrak{s}^\varphi$ and $\mathfrak s_2^\varphi$ are the celebrated \textit{de Branges–Rovnyak kernels} and \textit{sub-Bergman kernels}, which generate the de Branges–Rovnyak and sub-Bergman spaces, respectively. 

Serving as a rich source of Hilbert spaces of analytic functions, these spaces constitute a foundational concept in function theory, invariant subspace theory, operator model theory, and the theory of Pick interpolation. We refer the reader to \cite{dBR1, dBR2, dBR3, Zhu1996, Zhu2003} for the original work on these kernels, and to \cite{BB, BC, FricainMashreghi2016, HKZ_JLMS, LZ_CJM, Sarason, Timotin} for subsequent developments. Inspired by this classical construction, the kernel $k^\varphi$ defined in \eqref{Intro_kphi} for a general base kernel $k$ and a contractive multiplier $\varphi\in M(k)$ is likewise referred to as the de Branges-Rovnyak kernel. 

Given the dual prominence of complete Pick kernels and de Branges-Rovnyak kernels, the following intriguing open problem was posed in \cite{ADP_JGA2025}:

\begin{problem}\label{P:ADP}
Given a kernel $k$ on a set $X$, characterize those contractive multipliers $\varphi\in M(k)$ for which the de Branges-Rovnyak kernel $k^\varphi$ is a complete Pick kernel.
\end{problem}

Although challenging, this problem was successfully settled in \cite[Theorem 1.3]{ADP_JGA2025} for the class of kernels $k$ admitting a representation of the form
\begin{align}\label{Intro:k}
1-\frac{1}{k(x,y)} = \langle g(x), g(y)\rangle_\cG - \langle f(x), f(y)\rangle_\cF,
\end{align}
where $g:X\to\cG$ and $f:X\to\cF$ are functions mapping into auxiliary Hilbert spaces. This constitutes a remarkably large class of kernels, encompassing all complete Pick kernels as well as prominent non-complete Pick kernels, such as the Szeg\"o kernel of the polydisk $\bD^d$. Furthermore, this characterization elegantly recovers a result of Chu \cite{Chu} concerning the Hardy space of the unit disk $H(\mathfrak{s})=H^2$, and a result of Sautel \cite{Sautel} concerning the Drury-Arveson space of the Euclidean ball $\mathbb B_d$ with its associated kernel $k(\bm z,\bm w) = (1-\langle \bm z,\bm w\rangle)^{-1}$ for $\bm z,\bm w \in \mathbb B_d$.

\subsection{The main problem of interest}
Recent developments demonstrate that kernels admitting a complete Pick factor enjoy a rich function- and operator-theoretic structure, even when they fail to be complete Pick themselves. For instance, Clou\^atre, Hartz, and Schillo established in \cite{CHS_PAMS2019} that the associated spaces admit a version of the celebrated Beurling–Lax–Halmos theorem for characterizing invariant subspaces. When specialized to the complete Pick setting, their result recovers a seminal theorem of McCullough and Trent \cite{McT_JFA2000}. Furthermore, Bhattacharyya and Jindal proved in \cite[Theorem 2.13]{BJ_JOT2025} that every kernel possessing a complete Pick factor admits a characteristic function, thereby extending their earlier work \cite[Theorem 3.2]{BJ_Adv2023} from the standard complete Pick framework. We refer the reader to other recent works \cite{AlHMc_Adv, McTs_JLMS, Ts_JFA} exploring the structure of kernels admitting a complete Pick factor. Motivated by this line of inquiry, we consider the following generalization of Problem \ref{P:ADP}.

\begin{problem}\label{P:ADP'}
Given a kernel $k$ on a set $X$, characterize those contractive multipliers $\varphi\in M(k)$ for which the de Branges-Rovnyak kernel $k^\varphi$ admits a complete Pick factor.
\end{problem}

Part of the motivation for this work stems from an intriguing rigidity phenomenon demonstrated by Ahmed, Das, and Panja \cite[Theorem 4.2]{ADP_JGA2025}. In the context of the Hardy space of the polydisk $\mathbb D^d$ for $d\geq 3$, their result asserts that for the Szegő kernel $k_d$ of $\mathbb D^d$, there is no contractive multiplier $\varphi$ for which $k_d^\varphi$ is complete Pick. Interestingly, this rigidity disappears if we merely require that $k_d^\varphi$ admit a complete Pick factor, rather than requiring it to be one. Indeed, considering the multiplier $\varphi:\bD^3\to \bD$ given by $\varphi(z_1,z_2,z_3)=z_1$, the resulting kernel $k_3^\varphi$ is the product of the complete Pick kernels $(1-z_2\overline{w_2})^{-1}$ and $(1-z_3\overline{w_3})^{-1}$. 

A similar rigidity phenomenon was established for the Bergman kernel by Shen, Tian, and Yang \cite{STY-NYJM}, who proved that if $b_n$ denotes the Bergman kernel of $\bD^n$ for $n\geq 2$, there is no non-constant contractive analytic function $\varphi$ on $\bD^n$ such that $b_n^\varphi$ is complete Pick. As with the Szeg\"o kernel, an analogous coordinate-projection example demonstrates that this rigidity vanishes within the broader framework of Problem \ref{P:ADP'}. 

We also note the interesting earlier work of Luo and Zhu \cite{LZ_CJM}, which proves that the only contractive analytic functions $\varphi$ on $\mathbb D$ for which the sub-Bergman kernel $\mathfrak{s}_2^\varphi$ is complete Pick are the M\"obius transformations. In stark contrast, for every contractive analytic function $\varphi$ on $\bD$, the kernel $\mathfrak{s}_2^\varphi$ automatically admits a complete Pick factor—namely, the Szeg\"o kernel $\mathfrak{s}$. Given the rich repository of examples supporting an affirmative answer to Problem \ref{P:ADP'}, it is natural to seek structural representations of the functions for which Problem \ref{P:ADP'} can be solved constructively.

\subsection{Main results}
Our first main result, Theorem \ref{T:Main}, answers Problem \ref{P:ADP'} for kernels of the form \eqref{Intro:k}. This result appears to be new even in the classical setting of the unit disk. Regardless of whether $k$ is holomorphic, the result provides a characterization in terms of an operator-valued holomorphic function on the unit disk $\bD$ satisfying an interpolation condition (see item (2) of Theorem \ref{T:Main}). 

Furthermore, Theorem \ref{T:Main} item (3) shows that solving Problem \ref{P:ADP'} for a kernel $k$ of the form \eqref{Intro:k} is equivalent to solving Problem \ref{P:ADP} for an appropriately constructed auxiliary kernel $\widetilde k$ again of the form \eqref{Intro:k}. More precisely, given a kernel $k$ as in \eqref{Intro:k}, we construct a kernel $\widetilde k$ of the form \eqref{Intro:k} and show that $k^\varphi$ admits a complete Pick factor if and only if $(\widetilde k)^\varphi$ is a complete Pick kernel. The kernel $\widetilde k$, however, depends not only on $k$ but also on the non-complete Pick factor of $k^\varphi$. We will discuss how our result naturally recovers the main result of \cite{ADP_JGA2025} (and consequently, the results of Chu \cite{Chu} for the Szeg\"o kernel on $\bD$ and Sautel \cite{Sautel} for the Drury-Arveson kernel on the Euclidean Ball) when specialized to the complete Pick setting.

The approach adapted in \cite{ADP_JGA2025} heavily relies on two foundational results for complete Pick kernels: Leech's Theorem \cite{Leech} and a theorem due to Ball, Trent, and Vinnikov \cite{BTV}. To the author's knowledge, neither of these tools is available for general kernels admitting a complete Pick factor. The strategy developed in this paper instead makes crucial use of the now-classical lurking isometry argument.

After establishing our first main result in Section 2, we turn to its applications and utilities in Section 3. Applying Theorem \ref{T:Main} to Szeg\"o-type kernels of the form
\begin{align}\label{Intro_class-k}
k_g(x,y)=\frac{1}{1-g(x)\overline{g(y)}}
\end{align}
for some $g:X\to\bD$ yields a more detailed and succinct representation of the functions $\varphi$ for which $k_g^\varphi$ admits a complete Pick factor. We also apply our framework to Bergman kernels to provide an alternative, independent proof of the result by Luo and Zhu \cite{LZ_CJM}. We summarize these main applications below (the first two of which are in the context of Problem \ref{P:ADP}):

\begin{enumerate}
    \item Chu \cite{Chu} proved that a contractive function $\varphi\in H^\infty$ makes $\mathfrak{s}^\varphi$ a complete Pick kernel if and only if there exists a contractive $\psi\in H^\infty$ such that $\varphi(z)=z\psi(z)$. We show that this condition is further equivalent to the existence of an auxiliary Hilbert space $\cX$, vectors $\mathfrak b,\mathfrak{c}\in \cX$, an operator $D$ on $\cX$, and a scalar $\alpha\in\bD$ such that the block operator matrix
    $$
    \begin{bmatrix}
        \alpha &\mathfrak{b}^*\\
        \mathfrak{c}&D
    \end{bmatrix}:
    \begin{bmatrix}
        \bC \\
        \cX
    \end{bmatrix} \to        \begin{bmatrix}
        \bC \\
        \cX
    \end{bmatrix}
    $$
    is a contraction, and the function $\varphi$ can be represented in the form 
    $$
    \varphi(z) = \frac{\alpha z}{1-z\varphi(z)\langle (I-\varphi(z)D)^{-1}(\mathfrak{c}) , \mathfrak{b}\rangle_\cX}.
    $$
    In view of Chu's result, a moment's reflection reveals that for $\mathfrak{s}^\varphi$ to be complete Pick, it is necessary for $\varphi$ to be injective. We shall present an explicit non-injective function $\varphi$ such that $\mathfrak{s}^\varphi$ admits a non-trivial complete Pick factor (where a trivial complete Pick kernel denotes the constant function $1$).

    \item We apply our method to the specific context of the Bergman kernel $\mathfrak{s}_2$ of $\bD$ to provide a new proof of the result of Luo and Zhu \cite{LZ_CJM}, which asserts that \textit{the only contractive functions $\varphi\in H^\infty$ for which $\mathfrak{s}_2^\varphi$ is a complete Pick kernel are the automorphisms of $\bD$.} This assertion was also deduced in \cite{ADP_JGA2025} from their main theorem.

    \item We apply Theorem \ref{T:Main} to the class of kernels $k_g$ as in \eqref{Intro_class-k}, proving that $k_g^\varphi$ admits a complete Pick factor if and only if there exists an analytic function $\Phi:\bD^2\to \bD$ of a specific structure (see Theorem \ref{T:Classical}) satisfying the interpolation condition:
    $$
    \Phi(g(x),\varphi(x)) = \varphi(x)
    $$ 
    for each $x\in X$.
\end{enumerate}

\section{De Branges-Rovnyak kernels admitting a complete Pick factor}
We assume the reader is familiar with the basics of reproducing kernel Hilbert spaces; thorough treatments can be found in \cite{Aronszajn, PaulRaghu}. To set the stage, we first recall the main result of \cite[Theorem 1.3]{ADP_JGA2025}.
\begin{theorem}[Ahmed--Das--Panja \cite{ADP_JGA2025}]\label{T:ADP}
    Let $k$ on a domain $\Omega$ in $\bC^d$ be a non-vanishing kernel for which there exist Hilbert spaces $\cF,\cG$ and functions $f:\Omega\to \cF$, $g:\Omega\to\cG$ such that
    $$
    1-\frac{1}{k(x,y)} = \langle g(x),g(y)\rangle_\cG - \langle f(x),f(y)\rangle_\cF.
    $$ Let $k$ be normalized at a point $w_0\in \Omega$ and $\varphi:\Omega\to\bD$ be in $M(k)$ so that $\varphi(w_0)=0$. Then the following are equivalent:
    \begin{enumerate}
        \item The function $\left( 1-\varphi(x)\overline{\varphi(y)}\right)k(x,y)$ is a complete Pick kernel.
        \item There exists a contractive analytic function 
$        \Psi:\bD\to \cB\left(\cG,
        \sbm{
           \bC\\
    \cF
        }\right)$
        so that
        $$
        \Psi(\varphi(x)) g(x) = \sbm{\varphi(x) \\ f(x)}
        $$for every $x\in\Omega$.
    \end{enumerate}
\end{theorem}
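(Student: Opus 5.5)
The plan is a lurking isometry argument in both directions, run on the identity obtained by clearing denominators. Write $k^\varphi=(1-\varphi\overline\varphi)k$. Since $k$ is normalized at $w_0$ and $\varphi(w_0)=0$, the kernel $k^\varphi$ is normalized at $w_0$. The hypothesis on $k$ gives
\[
1-\frac{1}{k^\varphi(x,y)}=\frac{k(x,y)^{-1}(1-\varphi(x)\overline{\varphi(y)})-1+\ldots}{\ldots}.
\]
It is cleaner to compute directly. Since $1/k=1-\langle g,g\rangle+\langle f,f\rangle$,
\[
1-\frac{1}{k^\varphi(x,y)} = \frac{\langle g(x),g(y)\rangle-\langle f(x),f(y)\rangle-\varphi(x)\overline{\varphi(y)}}{1-\varphi(x)\overline{\varphi(y)}}.
\]
Hence (1) holds if and only if this quotient is positive semidefinite.

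For (2)$\Rightarrow$(1), set $\Psi(\varphi(x))g(x)=\sbm{\varphi(x)\\ f(x)}$. The numerator then equals
\[
\langle g(x),g(y)\rangle-\langle \Psi(\varphi(x))g(x),\Psi(\varphi(y))g(y)\rangle .
\]
Because $\Psi$ is a contractive analytic function on $\bD$, the kernel $(I-\Psi(z)\Psi(w)^*)/(1-z\overline w)$ is positive on $\bD$. Conjugating it by $g$ and substituting $z=\varphi(x)$, $w=\varphi(y)$ shows that the quotient above is positive. This is exactly $1-1/k^\varphi\succeq0$. The kernel $k^\varphi$ is non-vanishing because $\varphi$ maps into $\bD$, so (1) follows by the Agler--McCarthy characterization.

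For (1)$\Rightarrow$(2), write the quotient as $\langle u(x),u(y)\rangle_\cU$ by Kolmogorov factorization. Multiplying out gives
\[
\langle g(x),g(y)\rangle+\varphi(x)\overline{\varphi(y)}\langle u(x),u(y)\rangle=\varphi(x)\overline{\varphi(y)}+\langle f(x),f(y)\rangle+\langle u(x),u(y)\rangle .
\]
This equality of Gram kernels gives an isometry $V$ from the closed span of the vectors $\sbm{g(x)\\ \overline{\varphi(x)}\,u(x)}$ in $\cG\oplus\cU$ onto the closed span of the vectors $\sbm{\overline{\varphi(x)}\\ f(x)\\ u(x)}$ in $\bC\oplus\cF\oplus\cU$. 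I would arrange the conjugations carefully by working with the adjoint, i.e. with vectors indexed by $y$. We then extend $V$, or its adjoint, to a contraction. Padding $\cU$ if necessary lets us write it as a colligation
\[
\begin{bmatrix}A&B\\ C&D\end{bmatrix}:\begin{bmatrix}\cG\\ \cU\end{bmatrix}\to\begin{bmatrix}\bC\oplus\cF\\ \cU\end{bmatrix}.
\]
Solving the second row gives $u=(I-\varphi D)^{-1}Cg$. Substituting into the first row yields $\Psi(\varphi(x))g(x)=\sbm{\varphi(x)\\ f(x)}$, where
\[
\Psi(z)=A+zB(I-zD)^{-1}C
\]
is the transfer function. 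This $\Psi$ is a contractive analytic function on $\bD$ by the standard colligation computation.

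The main obstacle is the bookkeeping of conjugates in the lurking isometry: the factor $\varphi$ must attach to the $u$-component so that the transfer-function variable becomes $z=\varphi(x)$ rather than $\overline{\varphi(x)}$. Everything else is standard.
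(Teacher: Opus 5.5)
Your argument is essentially the paper's. Remark~\ref{R:ADP} recovers this statement as the case $\ell\equiv1$, $v\equiv1$ of the proof of Theorem~\ref{T:Main}. There the constant $1$ cancels, and what remains is your lurking isometry plus realization formula for (1)$\Rightarrow$(2), and your pulled-back positivity computation for (2)$\Rightarrow$(1). The paper routes the latter through a padded $\widetilde\Psi$, but it reduces to the same computation. Two points are wrong as written, however.

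\emph{The conjugates in the lurking isometry.} The conjugation ``obstacle'' you worry about does not exist, and the vectors you display do not work. The inner product is linear in its first slot, so $\langle \varphi(x)u(x),\varphi(y)u(y)\rangle=\varphi(x)\overline{\varphi(y)}\langle u(x),u(y)\rangle$. Hence your cleared-denominator identity says exactly that $\sbm{g(x)\\ \varphi(x)u(x)}\mapsto\sbm{\varphi(x)\\ f(x)\\ u(x)}$ extends to an isometry; this is \eqref{assignment} with $v\equiv1$ after cancelling the $1$. If you use $\overline{\varphi(x)}$ instead, the Gram entries become $\overline{\varphi(x)}\varphi(y)$ and no longer match. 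No passage to adjoints is needed. Your later formulas $u=(I-\varphi D)^{-1}Cg$ and $\Psi(\varphi(x))g(x)=\sbm{\varphi(x)\\ f(x)}$ are exactly what the unconjugated map produces.

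\emph{The kernel in (2)$\Rightarrow$(1).} You need $(I_{\cG}-\Psi(w)^*\Psi(z))/(1-z\overline{w})$, not $(I-\Psi(z)\Psi(w)^*)/(1-z\overline{w})$. The latter acts on $\bC\oplus\cF$ and cannot be compressed to $g(x)\in\cG$. The former is also positive: apply the usual fact to the contractive analytic function $z\mapsto\Psi(\overline{z})^*$, or use a realization of $\Psi$. With it, your numerator is exactly $\langle (I-\Psi(\varphi(y))^*\Psi(\varphi(x)))g(x),g(y)\rangle$.

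Finally, record the diagonal check the paper makes. Since $1-\|u(x)\|^2=1/k^\varphi(x,x)>0$, you get $\|u(x)\|<1$, so $k^\varphi=1/(1-\langle u(x),u(y)\rangle)$ is genuinely a complete Pick kernel.
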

The following terminology is convenient for us.
\begin{definition}
    Let $X$ be a set and $\cU$ be a Hilbert space. A function $u:X\to\cU$ is said to be \textit{non-vanishing} if the associated kernel function
$    (x,y)\mapsto \langle u(x),u(y)\rangle$ is non-vanishing in the usual sense. The function $u$ is called {\em normalized} at a point in $X$, if the associated kernel function is so at the same point.
\end{definition}   
In the theorem below, we do not require the kernel $k$ to be holomorphic (in the first coordinate and anti-holomorphic in the second coordinate). However, if $k$ is holomorphic, then all the ensuing kernels and functions in the theorem are holomorphic as well. The key fact that makes it possible is the Kolmogorov decomposition (see e.g., \cite[Theorem 2.3]{AMc_PickBook}) that asserts that if $k:X\times X\to\bC$ is a holomorphic kernel, then there is a Hilbert space $\cK$ and a holomorphic function $f:X\to\cK$ so that $k(x,y)=\langle f(x), f(y)\rangle_\cK$ for every $x,y\in X$.
\begin{theorem}\label{T:Main}
    Let $X$ be any set and $k$ on $X$ be a non-vanishing kernel such that
    \begin{align}\label{k}
    1-\frac{1}{k(x,y)} = \langle g(x),g(y)\rangle_\cG - \langle f(x),f(y)\rangle_\cF
    \end{align}
    where $\cG$, $\cF$ are Hilbert spaces, and $g:X\to\cB(\cG)$, $f:X\to\cB(\cF)$ are functions. Also suppose that $k$ is normalized at $x_0\in X$. Let $\varphi:X\to\bD$ be so that $\varphi(x_0)=0$. Then the following are equivalent:
    \begin{enumerate}
        \item There exists a non-vanishing, normalized (at $x_0$) kernel $\ell$ on $X$, and a Hilbert space $\cU$-valued function $u$ on $X$ such that 
    \begin{align}\label{CNP_Factor}
            \left(1-\varphi(x)\overline{\varphi(y)}\right)k(x,y) = \frac{\ell(x,y)}{1-\langle u(x),u(y)\rangle_\cU}.
        \end{align}
        \item There exist a Hilbert space $\cV$ and a non-vanishing, normalized (at $x_0$) function $v:X\to \cV$, and a holomorphic function  
        \begin{align}\label{ThePsi}
            \Psi:\bD\to \cB\left(\begin{bmatrix}
                \bC\\
                \cV\otimes\cG
            \end{bmatrix},
            \begin{bmatrix}
                \bC\\
                \cV\otimes \sbm{\bC\\ \cF}
            \end{bmatrix}\right)
        \end{align}so that for every $x\in X$,
        \begin{align}\label{PsiActs}
            \Psi(\varphi(x))\begin{bmatrix}
                1\\
                v(x)\otimes g(x)
            \end{bmatrix}=
            \begin{bmatrix}
                \varphi(x)\\
                v(x)\otimes\sbm{1\\f(x)}
            \end{bmatrix}.
        \end{align}
        \item There exist a Hilbert space $\cW$, a non-vanishing, normalized (at $x_0$) function $w:X\to \cW$, and a kernel $\widetilde k$ on $X$ of the form
        \begin{align}\label{tildeK}
            1-\frac{1}{\widetilde k(x,y)}=
            \langle \widetilde g(x),\widetilde g(y)\rangle_{\widetilde \cG} -\langle \widetilde f(x),\widetilde f(y)\rangle_{\widetilde \cF}
        \end{align}where
        \begin{align}\label{tildef&g}
&\notag        \widetilde g(x)=\begin{bmatrix}
            1\\
            w(x)\otimes g(x)
        \end{bmatrix}\in \begin{bmatrix}
            \bC \\
            \cW\otimes\cG
        \end{bmatrix}=:\widetilde\cG\mbox{ and }\\
        &
        \widetilde f(x)=\begin{bmatrix}
            w(x)\\
            w(x)\otimes f(x)
        \end{bmatrix}\in \begin{bmatrix}
            \cW \\
            \cW\otimes\cF
        \end{bmatrix}=:\widetilde \cF
        \end{align}such that
        $$
        \left( 1-\varphi(x)\overline{\varphi(y)} \right)\widetilde k(x,y)
        $$is a complete Pick kernel.

    \end{enumerate}
\end{theorem}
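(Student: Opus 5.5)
The plan is to reduce all three conditions to a single scalar identity and then pass between (1) and (2) with a lurking isometry argument, in the spirit of the realization approach of Theorem \ref{T:ADP} but without Leech's theorem. Given (1), the kernel $\ell$ is normalized and non-vanishing, so by the Kolmogorov decomposition I write $\ell(x,y)=\langle v(x),v(y)\rangle_\cV$ with $v$ normalized at $x_0$. Multiplying \eqref{CNP_Factor} by $(1-\langle u(x),u(y)\rangle)/k(x,y)$ and using \eqref{k} gives
\begin{align*}
(1-\varphi(x)\overline{\varphi(y)})(1-\langle u(x),u(y)\rangle)=\langle v(x),v(y)\rangle-\langle v(x)\otimes g(x),v(y)\otimes g(y)\rangle+\langle v(x)\otimes f(x),v(y)\otimes f(y)\rangle .
\end{align*}
Moving the negative terms across, this becomes a Gram identity:
\begin{align*}
1+\langle v(x)\otimes g(x),v(y)\otimes g(y)\rangle+\varphi(x)\overline{\varphi(y)}\langle u(x),u(y)\rangle
=\varphi(x)\overline{\varphi(y)}+\Big\langle v(x)\otimes\sbm{1\\ f(x)},v(y)\otimes\sbm{1\\ f(y)}\Big\rangle+\langle u(x),u(y)\rangle .
\end{align*}

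\emph{From (1) to (2).} The identity says that the map
\begin{align*}
\sbm{1\\ v(x)\otimes g(x)\\ \varphi(x)u(x)}\longmapsto\sbm{\varphi(x)\\ v(x)\otimes\sbm{1\\ f(x)}\\ u(x)}
\end{align*}
extends linearly to an isometry from the closed span of the left-hand vectors onto the closed span of the right-hand vectors. I then proceed as follows.
\begin{enumerate}
\item Extend this isometry by zero on the orthogonal complement. This gives a contraction $\sbm{A&B\\ C&D}$ from $\sbm{\bC\\ \cV\otimes\cG}\oplus\cU$ to $\sbm{\bC\\ \cV\otimes\sbm{\bC\\ \cF}}\oplus\cU$.
\item Read off the bottom row: $u(x)=C\sbm{1\\ v(x)\otimes g(x)}+\varphi(x)Du(x)$. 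Since $|\varphi(x)|<1$ and $\|D\|\le1$, this gives $u(x)=(I-\varphi(x)D)^{-1}C\sbm{1\\ v(x)\otimes g(x)}$.
\item Set $\Psi(\lambda)=A+\lambda B(I-\lambda D)^{-1}C$. This is a holomorphic contractive (Schur-class) function on $\bD$, and the top row gives exactly \eqref{PsiActs}.
\end{enumerate}

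\emph{From (2) to (1).} I read (2) with $\Psi$ contractive, which is implicit and is what the construction above produces. I then reverse the argument.
\begin{enumerate}
\item Take a unitary colligation realizing $\Psi(\lambda)=A+\lambda B(I-\lambda D)^{-1}C$.
\item Define $u(x):=(I-\varphi(x)D)^{-1}C\sbm{1\\ v(x)\otimes g(x)}$.
\item Because the colligation is unitary, it preserves inner products. Applied to the vectors $\sbm{1\\ v\otimes g\\ \varphi u}$, this recovers the Gram identity, and hence the first displayed identity with $\ell:=\langle v,v\rangle$.
\item Check the remaining requirements. On the diagonal, $(1-|\varphi(x)|^2)(1-\|u(x)\|^2)=\ell(x,x)/k(x,x)>0$, so $\|u(x)\|<1$. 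Therefore $1/(1-\langle u,u\rangle)$ is a kernel, by the geometric series and the Schur product theorem. The same identity shows it is non-vanishing.
\item Dividing by $1-\langle u,u\rangle$ gives \eqref{CNP_Factor}.
\end{enumerate}
Normalization is consistent on both sides: putting $y=x_0$ in \eqref{CNP_Factor} and using $\varphi(x_0)=0$ and $k(\cdot,x_0)=\ell(\cdot,x_0)=1$ forces $\langle u(x),u(x_0)\rangle=0$. So the complete Pick factor is normalized at $x_0$.

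\emph{(1) if and only if (3).} This is an algebraic observation. With $\widetilde g,\widetilde f$ as in \eqref{tildef&g}, we have $1-\langle\widetilde g,\widetilde g\rangle+\langle\widetilde f,\widetilde f\rangle=\langle w,w\rangle-\langle w\otimes g,w\otimes g\rangle+\langle w\otimes f,w\otimes f\rangle=\langle w,w\rangle/k$. Hence $\widetilde k=k/\ell$ with $\ell=\langle w,w\rangle$, and $(1-\varphi\overline\varphi)\widetilde k=(1-\varphi\overline\varphi)k/\ell$.
\begin{enumerate}
\item For (3)$\Rightarrow$(1), the hypothesis that this is a complete Pick kernel, say $1/(1-\langle u,u\rangle)$, is exactly \eqref{CNP_Factor}.
\item For (1)$\Rightarrow$(3), take $w=v$. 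The one point to check is that $\widetilde k=k/\ell$ is genuinely a non-vanishing kernel normalized at $x_0$. Positivity follows because $\widetilde k=(1-\varphi\overline\varphi)^{-1}\cdot(1-\langle u,u\rangle)^{-1}$ is a product of two kernels. Normalization follows from $k(\cdot,x_0)=\ell(\cdot,x_0)=1$.
\end{enumerate}

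\emph{Main obstacle.} I expect the delicate step to be (2)$\Rightarrow$(1). One must ensure that $\Psi$ admits a \emph{unitary} realization, so that the Gram identity is recovered with equality rather than merely an inequality. One must also verify that the resulting $1-\langle u,u\rangle$ is non-vanishing with $\|u\|<1$, so that it defines a genuine complete Pick kernel.
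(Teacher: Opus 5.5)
Your proof is correct. For (1)$\Rightarrow$(2) it is the paper's argument essentially verbatim: the same Gram identity, the lurking isometry extended to a contraction, and $\Psi(\lambda)=A+\lambda B(I-\lambda D)^{-1}C$. The other two parts take different routes.

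For (2)$\Rightarrow$(1), the paper never realizes $\Psi$. It sets $k'=(1-\varphi\overline{\varphi})k/\ell$ and proves $1-1/k'\succeq 0$ directly: it pulls back the positive kernel $(I-\Psi(\mu)^*\Psi(\lambda))/(1-\lambda\overline{\mu})$ along $\varphi$ and compresses it to the vectors $\sbm{1\\ v(x)\otimes g(x)}$. It then gets $u$ from a Kolmogorov decomposition of $1-1/k'$, and $\|u(x)\|<1$ from the diagonal. Your unitary-colligation argument is equivalent, since the unitary realization theorem is itself usually proved from that positivity by a lurking isometry. It gives you the explicit formula $u(x)=(I-\varphi(x)D)^{-1}C\sbm{1\\ v(x)\otimes g(x)}$ and the Gram identity with equality in one stroke.

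For the third item, the paper proves (2)$\Leftrightarrow$(3) by invoking the Ahmed--Das--Panja theorem for $\widetilde k$: its criterion for $\widetilde k$ is literally the interpolation identity in (2). You instead prove (1)$\Leftrightarrow$(3) via $1/\widetilde k=\langle w,w\rangle/k$. This gives $(1-\varphi\overline{\varphi})\widetilde k=(1-\varphi\overline{\varphi})k/\ell$, so the two conditions become the same statement. Your route is more elementary and self-contained, and it avoids applying a theorem stated for domains in $\bC^d$ to an arbitrary set $X$. The paper's route, in exchange, exhibits (2) as exactly the Ahmed--Das--Panja criterion for the auxiliary kernel. The paper already computes $\widetilde k=k/\langle v,v\rangle$, so your shortcut is one line away from its text.

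One detail to add: in (1)$\Rightarrow$(3) you use that $(1-\langle u,u\rangle)^{-1}$ is a kernel, which needs $\|u(x)\|<1$. This is not part of (1) as stated. It does follow from the diagonal of the factorization in (1), since $k(x,x)>0$, $\ell(x,x)>0$ and $|\varphi(x)|<1$, exactly as in your step 4, so say so there as well.
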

\begin{proof}We first prove (1)$\Longrightarrow$(2). By applying the Kolmogorov Decomposition to the kernel $\ell$, we obtain a Hilbert space $\cV$-valued function $v$ on $X$ so that $\ell(x,y)=\langle v(x),v(y)\rangle_\cV$ for every $x,y$ in $X$. A rearrangement of the terms in \eqref{CNP_Factor} and the form \eqref{k} of the kernel $k$ gives the following
\begin{align*}
\frac{1-\varphi(x)\overline{\varphi(y)}}
{1-\langle g(x),g(y)\rangle_\cG + \langle f(x),f(y)\rangle_\cF}=
\frac{\langle v(x),v(y)\rangle_\cV}{1-\langle u(x),u(y)\rangle}.
\end{align*} Cross-multiplication followed by a rearrangement yields
\begin{align}\label{KeyEqn}
&    1+\langle v(x)\otimes g(x),v(y)\otimes g(y)\rangle_{\cV\otimes\cG} + \langle \varphi(x) u(x), \varphi(y) u(y)\rangle_\cU \\ \notag
=& \;
    \varphi(x)\overline{\varphi(y)} + \langle v(x),v(y) \rangle_\cV+ \langle v(x)\otimes f(x),v(y)\otimes f(y)\rangle_{\cV\otimes \cF}+\langle u(x),u(y)\rangle_\cU .
\end{align}Let us pause here to note that if the factor $\langle v(x),v(y)\rangle_\cV$ was constant $1$, i.e., the case when the de Branges-Rovnyak kernel $k^\varphi$ is complete Pick, then we shall have a cancellation of the term $1$ on the left hand side with the term $\langle v(x),v(y)\rangle_\cV$ on the right hand side of \eqref{KeyEqn}. This will simplify greatly the argument that follows. We come back to this case in Remark \ref{R:ADP} below.

Define the subspaces
\begin{align}\label{Mspace}
    \cM = \bigvee\left\{
    \begin{bmatrix}
        1\\
        v(x)\otimes g(x)\\
        \varphi(x)u(x)
    \end{bmatrix}:x\in X
    \right\}\subset 
    \begin{bmatrix}
        \bC\\
        \cV\otimes \cG \\
        \cU
    \end{bmatrix}=:\mathfrak{H}
\end{align}and
\begin{align}\label{Nspace}
    \cN = \bigvee\left\{
    \begin{bmatrix}
        \varphi(x)\\
        v(x)\otimes \sbm{1\\f(x)}\\
        u(x)
    \end{bmatrix}:x\in X
    \right\}\subset 
    \begin{bmatrix}
        \bC\\
        \cV\otimes \sbm{\bC\\\cF} \\
        \cU
    \end{bmatrix}=:\mathfrak{K}.
\end{align}The identity \eqref{KeyEqn} implies that the assignment
\begin{align}\label{assignment}
\begin{bmatrix}
        1\\
        v(x)\otimes g(x)\\
        \varphi(x)u(x)
    \end{bmatrix} \mapsto
\begin{bmatrix}
        \varphi(x)\\
        v(x)\otimes \sbm{1\\f(x)}\\
        u(x)
    \end{bmatrix}\quad\mbox{for every }x\in X
\end{align}defines an isometry from $\cM$ onto $\cN$ when extended linearly and continuously. Let $V$ denote a contractive extension of this isometry from $\mathfrak{H}$ into $\mathfrak{K}$, where the Hilbert spaces $\mathfrak{H}$ and $\mathfrak{K}$ are as defined in \eqref{Mspace} and \eqref{Nspace}, respectively. It is convenient to denote this contractive operator by a block operator matrix as follows:
    $$
    V=\begin{bmatrix}
        A&B\\
        C&D
    \end{bmatrix}:\begin{bmatrix}
        \sbm{\bC \\ \cV\otimes\cG} \\ \cU
    \end{bmatrix}\to
    \begin{bmatrix}
        \sbm{\bC \\ \cV\otimes\sbm{\bC\\ \cF}} \\ \cU
    \end{bmatrix}.
    $$The operator $V$ is supposed to satisfy the mapping \eqref{assignment} and so we have
    \begin{align*}
&        A\sbm{1\\ v(x)\otimes g(x)} +\varphi(x) Bu(x) = \sbm{\varphi(x) \\ v(x)\otimes \sbm{1 \\ f(x)}}\quad\mbox{and}\\
&        C\sbm{1\\ v(x)\otimes g(x)} +\varphi(x) Du(x) = u(x)
    \end{align*}for every $x\in X$. The second equality above yields the following:
    $$
    u(x) = (I - \varphi(x)D)^{-1}C \sbm{1\\ v(x)\otimes g(x)}
    $$for every $x\in X$. We put this representation of the function $u$ in the first equation above to obtain
    \begin{align}\label{Realization}
        \sbm{\varphi(x) \\ v(x)\otimes \sbm{1 \\ f(x)}}=A\sbm{1\\ v(x)\otimes g(x)} +\varphi(x) B(I - \varphi(x)D)^{-1}C \sbm{1\\ v(x)\otimes g(x)}.
    \end{align}This shows that if we define the function $\Psi$ for $z\in\bD$ by
    $$
    \Psi(z)=A+zB(I-zD)^{-1}C,
    $$then by the classical Realization Formula, $\Psi$ is a contractive analytic function on $\bD$ taking values in the space of operators from $\sbm{\bC\\ \cV\otimes\cG}$ into $\sbm{\bC \\ \cV\otimes\sbm{\bC\\ \cF}}$. We refer the readers to \cite[Section 6.2]{AMc_PickBook} for the precise statement and a proof of the Realization Formula.  The identity \eqref{Realization} establishes (2).

We next prove (2)$\Longrightarrow$(1). With the function $v$ as in (2), we show that (1) holds for the kernel $\ell(x,y)=\langle v(x),v(y)\rangle_\cV$. Since $v$ is assumed to be non-vanishing, so is the kernel $\ell$. Thus we need to show that the non-vanishing, normalized (at $x_0$) function
    $$
k'(x,y):=\frac{    \left(1-\varphi(x)\overline{\varphi(y)}\right)k(x,y)}{\langle v(x),v(y)\rangle_\cV}
    $$is a complete Pick kernel. We argue that $1-1/k'$ is a kernel taking values in $\bD$. For this we use the form \eqref{k} of $k$ to compute
    \begin{align}\label{aux3}
        \notag &1-\frac{1}{k'(x,y)} \\
       \notag &= 1 - \frac{\langle v(x),v(y)\rangle \left( 1 - \langle g(x),g(y)\rangle+ \langle f(x),f(y)\rangle \right)}{(1-\varphi(x)\overline{\varphi(y)})}\\
        &= \frac{1-\varphi(x)\overline{\varphi(y)} - \langle v(x),v(y)\rangle + \langle v\otimes g(x),v\otimes g(y)\rangle - \langle v\otimes f(x),v\otimes f (y)\rangle}{(1-\varphi(x)\overline{\varphi(y)})}.
    \end{align} We use item (2) to first note that the function from $X\times X$ into $\bC$ defined by
    \begin{align*}
        (x,y) \mapsto \frac{\left\langle \bigg(I-\Psi(\varphi(y))^*\Psi(\varphi(x))\bigg)
         \sbm{1 \\ v\otimes g(x)},  \sbm{1 \\ v\otimes g(y)} \right\rangle}{1-\varphi(x)\overline{\varphi(y)}}
    \end{align*}is a kernel. This is because, by item (2), $\Psi$ is a contractive analytic function on $\bD$ and $\varphi$ takes values in $\bD$. Using \eqref{PsiActs}, we compute
    \begin{align*}
        &\left\langle \bigg(I-\Psi(\varphi(y))^*\Psi(\varphi(x))\bigg)
        \sbm{1 \\ v\otimes g(x)}, 
        \sbm{1 \\ v\otimes g(y)} \right\rangle \\
        &= \left\langle \sbm{1 \\ v\otimes g(x)}, \sbm{1 \\ v\otimes g(y)} \right\rangle
        -
        \left\langle \sbm{\varphi(x) \\ v(x)\otimes \sbm{1\\ f(x)}}, \sbm{\varphi(y) \\ v(y)\otimes \sbm{1\\ f(y)}} \right\rangle\\
        &= 1+ \langle v\otimes g(x),v\otimes g(y) \rangle -\varphi(x)\overline{\varphi(y)}-\langle v(x),v(y) \rangle - \langle v\otimes f(x),v\otimes f(y)\rangle.
    \end{align*} Thus we have shown that $1-1/k'\succeq 0$. To show that this kernel takes values in $\bD$, it is enough to show that $1-1/k'(x,x)$ is less than one. All this together will make $k'$ a complete Pick kernel. We use the expression \eqref{aux3} to compute
    \begin{align*}
        1-\frac{1}{k'(x,x)} = \frac{1-|\varphi(x)|^2 - \|v(x)\|^2 (1 - \|g(x)\|^2 + \|f(x)\|^2)}{1-|\varphi(x)|^2}.
    \end{align*}This gives us the desired conclusion because $1 - \|g(x)\|^2 + \|f(x)\|^2 = 1/k(x,x)>0$. This completes the proof of (2)$\Longrightarrow$(1).
    \medskip

    \noindent
    \textit{Proof of (2)$\Longleftrightarrow$(3):} To obtain item (3) from item (2), let us take $(\cW,w)=(\cV,v)$, where the space $\cV$ and the function $v$ are as in (2). We rearrange the terms in \eqref{tildeK} to get
    \begin{align}\label{TildeAvatar}
        \widetilde k(x,y) = \frac{1}{1-\langle \widetilde g(x),\widetilde g(y) \rangle + \langle \widetilde f(x),\widetilde f(y) \rangle }.
    \end{align} Using the definitions of the functions $\widetilde g$ and $\widetilde f$ as in \eqref{tildef&g} we see that the above expression of $\widetilde k$ is the same as
      \begin{align}\label{TildeAvatar'}
        \widetilde k(x,y) = \frac{1}{\langle v(x),v(y) \rangle \left( 1-\langle  g(x), g(y) \rangle + \langle  f(x), f(y) \rangle\right) } = \frac{k(x,y)}{\langle v(x),v(y) \rangle}.
    \end{align}
    We need to first show that $\widetilde k$ is a kernel. We have already shown that item (2) implies \eqref{CNP_Factor} with $\ell(x,y)=\langle v(x),v(y)\rangle$, i.e., from item (2) we have deduced the following identity of functions
    $$
    (1-\varphi(x)\overline{\varphi(y)})k(x,y)=\frac{\langle v(x),v(y) \rangle}{1-\langle u(x),u(y) \rangle},
    $$which is the same as 
    $$
\frac{k(x,y)}{\langle v(x),v(y) \rangle}=\frac{1}{1-\varphi(x)\overline{\varphi(y)}} \cdot \frac{1}{1-\langle u(x),u(y) \rangle}.
    $$The terms on the right hand side is the product of two kernels and hence is a kernel.
    
    Next note that the kernel $\widetilde k$ satisfies the hypothesis \eqref{k} of Theorem \ref{T:ADP}, and so we can apply this theorem to the kernel $\widetilde k$. By part (2) of Theorem \ref{T:ADP}, identity \eqref{PsiActs} is precisely what is required for the kernel $\left( 1-\varphi(x)\overline{\varphi(y)}\right)\widetilde k(x,y)$ to be complete Pick. Theorem \ref{T:ADP} also yields the converse direction (3)$\Longrightarrow$(2) when applied to the kernel $\widetilde k$.
\end{proof}

\begin{remark}\label{R:Normalization}
    We remark here that the normalization assumption in Theorem \ref{T:Main} does not put a restriction on either the kernel $k$ or the function $\varphi$. Indeed, by \cite[Theorem 3.1]{AM_JFA} a non-vanishing kernel $k$ is complete Pick if and only if it is of the form
    $$
    k(x,y) = \frac{\delta(x)\overline{\delta(y)}}{1-\langle u(x), u(y)\rangle_\cU}
    $$for a non-vanishing function $\delta:X\to\bC$ and a Hilbert space valued function $u:X\to\cU$. A kernel of the above form is complete Pick if and only if the kernel $
    (1-\langle u(x), u(y)\rangle)^{-1}
    $ is so. The normalization condition on $k$ removes the rank one factor $\delta(x)\overline{\delta(y)}$. It is convenient to not have to deal with the additional rank one factor. For further elaboration on this, we refer the reader to \cite[Section 3]{AM_JFA}. On the other hand, it is well known that if $k$ is a kernel on $X$, $\varphi$ is contractive in $M(k)$ and $x_0\in X$ is arbitrary, then $k^\varphi$ is complete Pick if and only if $k^{\widetilde\varphi}$ is so where $\widetilde\varphi = m_{\varphi(x_0)}\circ \varphi$ with $m_{\varphi(x_0)}$ denoting an automorphism of $\bD$ that vanishes at $\varphi(x_0)$.
\end{remark}

\begin{remark}\label{R:ADP}
Here we discuss how Theorem \ref{T:ADP}, the main result of Ahmed-Das-Panja \cite{ADP_JGA2025}, can be deduced as a special case of our main result Theorem \ref{T:Main}. This corresponds to the case when $\ell(x,y)=1$ in \eqref{CNP_Factor}.

 Suppose that we have a contractive analytic function $\Psi:\bD\to\cB(\cG, \sbm{\bC \\ \cF})$ as in item (2) of Theorem \ref{T:ADP}. Let us denote this function as a $2\times 2$ block matrix as follows:
\begin{align*}
    \Psi(z) = \begin{bmatrix}
        \Psi_1(z) \\ \Psi_2(z)
    \end{bmatrix}:\cG 
\to \begin{bmatrix}
    \bC \\
    \cF
\end{bmatrix}.
\end{align*}Consider the function $\widetilde\Psi:\bD\to\cB(\sbm{\bC \\ \cG}, \sbm{\bC \\ \bC \\ \cF})$ as follows:
\begin{align*}
    \widetilde\Psi(z)=
    \begin{bmatrix}
        0  & \Psi_1(z) \\
        1 & 0 \\
        0 & \Psi_2(z)
    \end{bmatrix}.
\end{align*}Then it is an easy check that $\widetilde\Psi$ is a contractive analytic function and it satisfies
$$
\widetilde\Psi(\varphi(x)) \begin{bmatrix}
    1 \\
    g(x)
\end{bmatrix} =
\begin{bmatrix}
    \varphi(x) \\
    1\\
    f(x)
\end{bmatrix}
$$This is precisely item (2) of Theorem \ref{T:Main} with $(\cV,v)=(\bC,1)$. By the implication (2)$\Longrightarrow$(1) of Theorem \ref{T:Main}, $(1-\varphi(x)\overline{\varphi(y)})k(x,y)$ is a complete Pick kernel.

We now demonstrate how to obtain (1)$\Longrightarrow$(2) of Theorem \ref{T:ADP} from that of Theorem \ref{T:Main}. Thus we have item (1) of Theorem \ref{T:Main} but with the constant factor $\ell(x,y)=1$. Then the function $v$ in item (2) of Theorem \ref{T:Main} can be chose to be the constant function taking value $1$ in $\bC=\cV$. In this case, as noted in the proof of Theorem \ref{T:Main}, the identity \eqref{KeyEqn} obtained in the proof of (1)$\Longrightarrow$(2) will be simplified. Indeed, all the tensor products will disappear and the term $1$ on the left hand side will cancel out with the term $\langle v(x), v(y)\rangle$ on the right hand side. Tracing the proof of (1)$\Longrightarrow$(2) in Theorem \ref{T:Main} with this cancellation in mind, we arrive at a function  \begin{align}\label{ThePsi'}
            \Psi:\bD\to \cB\left(
 \cG
 ,
            \begin{bmatrix}
 \bC\\ \cF
            \end{bmatrix}\right)
        \end{align}so that for every $x\in X$,
        \begin{align}\label{PsiActs}
            \Psi(\varphi(x))
                g(x)
            =
            \begin{bmatrix}
                \varphi(x)\\
               f(x)
            \end{bmatrix}.
        \end{align}This is precisely the item (2) of Theorem \ref{T:ADP}.
\end{remark}

\section{Some illustrative examples}
To illustrate the utility of our main result (Theorem \ref{T:Main}), we now present some concrete examples of kernels. Here we provide a somewhat new description of $\varphi\in H^\infty$ making the classical de Branges-Rovnyak kernel $\mathfrak{s}^\varphi$ a complete Pick kernel. We also give a new proof of a result due to Luo and Zhu \cite{LZ_CJM} and \cite{ADP_JGA2025}.

\subsection{The Szeg\"o kernel example} Here we illustrate the special case when the kernel $k$ is of the form
\begin{align}\label{classical_k}
k(x,y)=\frac{1}{1-g(x)\overline{g(y)}}
\end{align}for some function $g:X\to\bD$. The case $(X,g)=(\bD,z)$ includes the Szeg\"o kernel for the unit disk. With the notations as in Theorem \ref{T:Main}, we have $\cG=\bC$ and $\cF=\{0\}$. In this case, we have a more succinct necessary and sufficient condition.
\begin{theorem}\label{T:Classical}
    Let $k:X\times X\to\bC$ be a kernel of the form \eqref{classical_k} for some function $g:X\to\bD$ vanishing at a point $x_0\in X$. For a function $\varphi:X\to\bD$ vanishing at $x_0$, the following are equivalent:
    \begin{enumerate}
        \item There exist Hilbert space $\cU$-valued function $u$ on $X$ such that $\|u(x)\|_\cU <1$ and a non-vanishing Hilbert space $\cV$-valued function $v$ on $X$, such that for all $x,y$ in $X$,
        \begin{align}\label{Szego_phi}
           \frac{(1-\varphi(x)\overline{\varphi(y)})}{1-g(x)\overline{g(y)}} = \frac{\langle v(x),v(y)\rangle_\cV}{1-\langle u(x),u(y)\rangle_\cU}.
        \end{align}
        \item There exist an auxiliary Hilbert space $\cX$, and a contractive analytic function $\mathbb U$ on $\bD$:
        $$
        \mathbb U(z):=\begin{bmatrix}
        A(z) & B(z) \\
        C(z) & D(z)\
        \end{bmatrix}:
        \begin{bmatrix}
            \bC \\
            \cX
        \end{bmatrix}\to
        \begin{bmatrix}
            \bC \\
            \cX
        \end{bmatrix}
        $$such that the $\cX$-valued function defined on $X$ by
        \begin{align}\label{v(z)}
X\ni         x\mapsto (I-g(x)D(\varphi(x)))^{-1}C(\varphi(x)) \in \cX
        \end{align}is non-vanishing, and
        with the scalar-valued contractive analytic function defined on the bidisk $\bD^2$ by
        $$
        \Phi(z_1,z_2)= A(z_2) + z_1B(z_2)(I - z_1 D(z_2))^{-1}C(z_2),
        $$ we have
\begin{align}\label{Interesting}
        \varphi(x) = \Phi(g(x),\varphi(x))
        \end{align} for every $x\in X$.
\item There exist auxiliary Hilbert spaces $\cX_1,\cX_2$ and a unitary operator
    $$
    \begin{bmatrix}
        A&B\\
        C&D
    \end{bmatrix}:
    \begin{bmatrix}
        \bC \\
        \sbm{\cX_1 \\ \cX_2}
    \end{bmatrix} \to     \begin{bmatrix}
        \bC \\
        \sbm{\cX_1 \\ \cX_2}
    \end{bmatrix}
    $$so that with the functions $\widetilde v:X\to\cX_1$ and $\widetilde u:X\to\cX_2$ defined by
    \begin{align}\label{TheAuxs}
    \begin{bmatrix}
        \widetilde v(x) \\
        \widetilde u(x)
    \end{bmatrix}=
    \left ( I_{\sbm{ \cX_1 \\ \cX_2}} - D \begin{bmatrix}
        g(x)I_{\cX_1} & 0\\
        0 & \varphi(x)I_{\cX_2}
    \end{bmatrix}\right)^{-1}C
    \end{align}we have $\widetilde v$ non-vanishing and $\|\widetilde u(\cdot)\|<1$ and with
    the contractive analytic function $\Psi:\bD^2\to \bC$ defined by
    \begin{align}\label{BidiskPsi}
    \Psi(z_1,z_2) = A + B \begin{bmatrix}
        z_1I_{\cX_1} & 0\\
        0 & z_2I_{\cX_2}
    \end{bmatrix}\left ( I_{\sbm{ \cX_1 \\ \cX_2}} - D \begin{bmatrix}
        z_1I_{\cX_1} & 0\\
        0 & z_2I_{\cX_2}
    \end{bmatrix}\right)^{-1}C
    \end{align}we have
    \begin{align}\label{Interesting'}
   \varphi(x)= \Psi(g(x),\varphi(x)) .
    \end{align}
    \end{enumerate}

\end{theorem}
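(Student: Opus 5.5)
The plan is to prove the cycle (1)$\Longrightarrow$(3)$\Longrightarrow$(2)$\Longrightarrow$(1). Each step adapts the lurking isometry and realization arguments from the proof of Theorem \ref{T:Main} to the case $\cG=\bC$, $\cF=\{0\}$.

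\emph{(1)$\Longrightarrow$(3).} Cross-multiply \eqref{Szego_phi} and rearrange, exactly as in \eqref{KeyEqn}. This gives
$$
1+\langle g(x)v(x),g(y)v(y)\rangle+\langle \varphi(x)u(x),\varphi(y)u(y)\rangle=\varphi(x)\overline{\varphi(y)}+\langle v(x),v(y)\rangle+\langle u(x),u(y)\rangle .
$$
Hence the assignment
$$\sbm{1\\ g(x)v(x)\\ \varphi(x)u(x)}\mapsto\sbm{\varphi(x)\\ v(x)\\ u(x)}$$
extends to an isometry between the closed spans inside $\bC\oplus\cV\oplus\cU$. Enlarge $\cV\subset\cX_1$ and $\cU\subset\cX_2$ by an infinite-dimensional summand so that the defect spaces have equal dimension, and extend the isometry to a unitary $\sbm{A&B\\C&D}$ on $\bC\oplus\cX_1\oplus\cX_2$. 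Write $\Delta(x)=\operatorname{diag}(g(x)I,\varphi(x)I)$. The two block rows read
$$A+B\Delta(x)\sbm{v(x)\\u(x)}=\varphi(x),\qquad C+D\Delta(x)\sbm{v(x)\\u(x)}=\sbm{v(x)\\u(x)}.$$
Since $\|D\Delta(x)\|<1$, the second equation gives $\sbm{v\\u}=(I-D\Delta)^{-1}C$, that is, $\widetilde v=v$ and $\widetilde u=u$. The first equation is then \eqref{Interesting'}. The non-vanishing of $\widetilde v$ is inherited from $v$. The bound $\|u(x)\|<1$ follows from evaluating \eqref{Szego_phi} on the diagonal, since the left side is positive there.

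\emph{(3)$\Longrightarrow$(2).} Eliminate the $\cX_2$-variable by a Schur complement. Split $B=[B_1\ B_2]$, $C=\sbm{C_1\\C_2}$ and $D=(D_{ij})$ according to $\cX_1\oplus\cX_2$, and set
$$\mathbb U(z)=\begin{bmatrix}A&B_1\\C_1&D_{11}\end{bmatrix}+z\begin{bmatrix}B_2\\D_{21}\end{bmatrix}(I-zD_{22})^{-1}\begin{bmatrix}C_2&D_{12}\end{bmatrix},$$
with $\cX=\cX_1$. This is the transfer function of the unitary colligation obtained by reordering the blocks of the original unitary. By the Realization Formula, $\mathbb U$ is a contractive analytic function on $\bD$.

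Solving the $\cX_2$-row of \eqref{TheAuxs} for $\widetilde u$ and substituting it into the $\cX_1$-row shows that $\widetilde v(x)=(I-g(x)D(\varphi(x)))^{-1}C(\varphi(x))$, so the function \eqref{v(z)} is non-vanishing. The same substitution shows that $\Phi(z_1,z_2)=\Psi(z_1,z_2)$, so \eqref{Interesting} follows from \eqref{Interesting'}. I expect this to be the main bookkeeping obstacle. Two things must be checked: that the reordered block operator really is unitary, so that $\mathbb U$ is contractive; and that the inverses $(I-g(x)D(\varphi(x)))^{-1}$ exist. The latter needs $\|D(z)\|\le 1$ together with $|g|<1$.

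\emph{(2)$\Longrightarrow$(1).} Put $v(x):=(I-g(x)D(\varphi(x)))^{-1}C(\varphi(x))$. By construction,
$$\mathbb U(\varphi(x))\sbm{1\\ g(x)v(x)}=\sbm{\varphi(x)\\ v(x)}.$$
The bottom row is the defining equation $C+gDv=v$. The top row is $A+gB\,v=\Phi(g,\varphi)=\varphi$, using \eqref{Interesting}.

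Since $\mathbb U$ is a contractive analytic function, the kernel $(I-\mathbb U(w)^*\mathbb U(z))/(1-z\overline w)$ is positive (the Schur class characterization). Pairing it at $z=\varphi(x)$, $w=\varphi(y)$ against the vectors $\sbm{1\\ g v}$ shows, as in the computation following \eqref{aux3}, that
$$\frac{1+g(x)\overline{g(y)}\langle v(x),v(y)\rangle-\varphi(x)\overline{\varphi(y)}-\langle v(x),v(y)\rangle}{1-\varphi(x)\overline{\varphi(y)}}\succeq0 .$$
This expression is $1-1/k'$, where $k'=k^\varphi/\langle v,v\rangle$. Its Kolmogorov decomposition supplies $u$ with \eqref{Szego_phi}. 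Finally, on the diagonal this expression equals $1-\|v(x)\|^2(1-|g(x)|^2)/(1-|\varphi(x)|^2)$, which is strictly less than $1$ because $v(x)\neq0$ and $|g(x)|<1$. This gives $\|u(x)\|<1$, completing the cycle.
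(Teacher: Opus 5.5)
Your proof is correct, and it closes the cycle differently from the paper. The paper proves (1)$\Leftrightarrow$(2) by noting that item (2) is item (2) of Theorem \ref{T:Main} for $\cG=\bC$, $\cF=\{0\}$ in disguise. It proves (1)$\Leftrightarrow$(3) by running the lurking isometry both ways. Its (3)$\Rightarrow$(1) needs only that the unitary preserves inner products, which returns \eqref{Lurk} without any realization theory or Schur-class positivity.

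You share the (1)$\Rightarrow$(3) step with the paper. Your (2)$\Rightarrow$(1) is the paper's (2)$\Rightarrow$(1) computation from Theorem \ref{T:Main}, specialized and inlined. Your (3)$\Rightarrow$(2) is genuinely new: eliminating the $\cX_2$ channel of the colligation shows that the function \eqref{v(z)} coincides with $\widetilde v$ and that one can take $\Phi=\Psi$. This explains why items (2) and (3) carry literally the same interpolation condition.

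Your cycle is also self-contained. It never uses normalization of $v$ at $x_0$, which Theorem \ref{T:Main} assumes but items (1) and (2) here do not. The paper's route, in turn, is shorter once Theorem \ref{T:Main} is available.

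One slip to fix: with $D_{ij}:\cX_j\to\cX_i$, repartitioning $\bC\oplus\cX_1\oplus\cX_2$ as $(\bC\oplus\cX_1)\oplus\cX_2$ gives off-diagonal blocks $\sbm{B_2\\ D_{12}}$ and $\sbm{C_2 & D_{21}}$. So $D_{12}$ and $D_{21}$ are swapped in your formula for $\mathbb U$.

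The two checks you flag are immediate. Nothing is actually reordered: it is the same unitary, only partitioned differently, so it is unitary. And $\|D(z)\|\le\|\mathbb U(z)\|\le 1$ together with $|g|<1$ gives invertibility of $I-g(x)D(\varphi(x))$.
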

\begin{proof}
To show (1)$\Longleftrightarrow$(2),  our strategy is to show that item (2) in this theorem is equivalent to  item (2) of Theorem \ref{T:Main} when applied to the special case \eqref{classical_k}. Item (2) of Theorem \ref{T:Main} in this case is as follows: there exist a Hilbert space $\cV$, a function $v:X\to\cV$ and a contractive holomorphic function 
    $\Psi:\bD\to\cB\left(\sbm{\bC \\ \cV}\right)$ so that
    \begin{align}\label{Goal}
    \Psi(\varphi(x))\begin{bmatrix}
        1\\
        g(x )v(x)
    \end{bmatrix} = \begin{bmatrix}
        \varphi(x) \\
        v(x)
    \end{bmatrix}
    \end{align}for every $x\in X$.

    Let us suppose that item (2) of the present theorem holds. To obtain a contractive holomorphic function 
    $\Psi:\bD\to\cB\left(\sbm{\bC \\ \cV}\right)$ satisfying \eqref{Goal}, choose $\cV=\cX$ and define $v:X\to\cV$ by
    $$
    v(x):= (I - g(x)D(\varphi(x)))^{-1}C(\varphi(x))
    $$for every $x\in X$. This is the same as having
    \begin{align}\label{1half}
        v(x) = g(x)D(\varphi(x))v(x) + C(\varphi(x))
    \end{align}for every $x\in X$. Next we unfold \eqref{Interesting} and use the definition of $v$ to see that for every $x\in X$
    \begin{align}\label{2half}
        \varphi(x)= A(\varphi(x)) + g(x)B(\varphi(x))v(x).
    \end{align}We see that with $\Psi=\mathbb U$, \eqref{Goal} can be read off from \eqref{1half} and \eqref{2half}. Thus item (2) of Theorem \ref{T:Main} is obtained. To obtain item (2) of Theorem \ref{T:Classical} from item (2) of Theorem \ref{T:Main}, the argument given above can be reversed.

    We now prove (1)$\Longrightarrow$(3). We begin by rearranging the terms in \eqref{Szego_phi} to obtain
\begin{align}\label{Lurk}
\notag    1 + \langle \varphi(x) u(x), \varphi(y) u(y)\rangle &+ \langle g(x) v(x), g(y) v(y)\rangle \\
    &= \varphi(x)\overline{\varphi(y)}  + \langle v(x), v(y)\rangle + \langle u(x), u(y)\rangle,
\end{align} which is the same as
    \begin{align*}
   & 1 +
   \left \langle
    \begin{bmatrix}
        g(x)I_{\cV} & 0\\
        0 & \varphi(x)I_{\cU}
\end{bmatrix}\begin{bmatrix}
        v(x) \\
        u(x)
    \end{bmatrix},
    \begin{bmatrix}
        g(y)I_{\cV} & 0\\
        0 & \varphi(y)I_{\cU}
        \end{bmatrix}\begin{bmatrix}
        v(y) \\
        u(y)
    \end{bmatrix}
   \right \rangle \\
   &=\varphi(x)\overline{\varphi(y)} +  \left\langle
   \begin{bmatrix}
        v(x) \\
        u(x)
    \end{bmatrix},\begin{bmatrix}
        v(y) \\
        u(y)
    \end{bmatrix}
    \right\rangle.
    \end{align*}
   Thus, if we denote the spaces
   \begin{align}\label{Mspace'}
       \cM=
       \left\{
\begin{bmatrix}
    1\\
        g(x)v(x)\\
        \varphi(x)u(x)
\end{bmatrix}
:x\in X
       \right\}\subseteq \begin{bmatrix}
           \bC\\
           \cV\\
           \cU
       \end{bmatrix}
   \end{align}
   and
   \begin{align}\label{Nspace'}
       \cN=
       \left\{
\begin{bmatrix}
    \varphi(x)\\
        v(x)\\
        u(x)
\end{bmatrix}:x\in X
       \right\}\subseteq \begin{bmatrix}
           \bC\\
           \cV\\
           \cU
       \end{bmatrix},
   \end{align}then \eqref{Lurk} shows that the assignment 
   \begin{align}\label{Map}
       \begin{bmatrix}
    1\\
        g(x)v(x)\\
        \varphi(x)u(x)
\end{bmatrix} \to \begin{bmatrix}
    \varphi(x)\\
        v(x)\\
        u(x)
\end{bmatrix}\quad\mbox{for every $x\in X$}
   \end{align} defines an isometry from $\cM$ onto $\cN$ when extended linearly and continuously. If required, we can add a Hilbert space of appropriate dimension to $\cU$ (still denoted by $\cU$) to extend the above isometry to a unitary operator 
   $$
   \begin{bmatrix}
       A&B\\
       C&D
   \end{bmatrix}:
   \begin{bmatrix}
       \bC\\
       \sbm{
       \cV\\
       \cU
       }
   \end{bmatrix}\to
   \begin{bmatrix}
       \bC\\
       \sbm{
       \cV\\
       \cU
       }
   \end{bmatrix}.
   $$ Choose $\cX_1=\cV$ and $\cX_2=\cU$ and define the function $\Psi:\bD^2\to\bC$ as in \eqref{BidiskPsi}. By the well-known Realization Formula for the bidisk (see e.g., \cite[Theorem 11.13]{AMc_PickBook}), $\Psi$ is a contractive analytic function. Since the operator $\sbm{A&B\\C&D}$ satisfies the mapping \eqref{Map}, we have
   \begin{align}\label{Lurk1}
       A + B \begin{bmatrix}
           g(x)I_\cV & 0\\
           0 & \varphi(x)I_\cU    \end{bmatrix}\begin{bmatrix}
           v(x) \\ u(x)
       \end{bmatrix} &= \varphi(x) \quad\mbox{and} \\ \notag
       C + D\begin{bmatrix}
           g(x)I_\cV & 0\\
           0 & \varphi(x)I_\cU       \end{bmatrix}\begin{bmatrix}
           v(x) \\ u(x)
       \end{bmatrix} &= \begin{bmatrix}
           v(x) \\
           u(x)
       \end{bmatrix}.
   \end{align}The second equation gives
   \begin{align}
       \begin{bmatrix}
           v(x) \\
           u(x)
       \end{bmatrix} = \left (I- D\begin{bmatrix}
           g(x)I_\cV & 0\\
           0 & \varphi(x)I_\cU
       \end{bmatrix}\right)^{-1}C.
   \end{align}Plugging this in \eqref{Lurk1} we get
   $$
   \varphi(x) = A + B \begin{bmatrix}
           g(x)I_\cV & 0\\
           0 & \varphi(x)I_\cU
       \end{bmatrix} \left (I- D\begin{bmatrix}
           g(x)I_\cU & 0\\
           0 & \varphi(x)I_\cU
       \end{bmatrix}\right)^{-1}C = \Psi(g(x),\varphi(x)).
   $$Thus we have proved item (3).

   Conversely, to obtain item (1) from item (3), we assume that there are spaces $\cX_1,\cX_2$, functions $\widetilde v$, $\widetilde u$ and $\Psi$ as in item (3). The assumption on the functions $\widetilde v$ and $\widetilde u$ ensures that the kernel
   $$
   (x,y) \mapsto \langle \widetilde v(x), \widetilde v(y) \rangle 
   $$is non-vanishing, and 
   $$
   (x,y) \mapsto \frac{1}{1-\langle \widetilde u(x), \widetilde u(y)\rangle}
   $$is a complete Pick kernel on $X$. We shall arrive at \eqref{Szego_phi} with $v=\widetilde v$ and $u=\widetilde u$. We first rewrite \eqref{TheAuxs} to get
   $$
   \begin{bmatrix}
       \widetilde v(x) \\
       \widetilde u(x)
   \end{bmatrix} = C + D\begin{bmatrix}
       g(x)I_{\cX_1} & 0\\
       0 & \varphi(x)I_{\cX_2}
   \end{bmatrix}\begin{bmatrix}
       \widetilde v(x)\\
       \widetilde u(x)
   \end{bmatrix}.
   $$In view of the definition of the function $\Psi$, we rewrite \eqref{Interesting'} to get
   $$
   A+B\begin{bmatrix}
       g(x)I_{\cX_1} & 0\\
       0 & \varphi(x)I_{\cX_2}
   \end{bmatrix}\begin{bmatrix}
       \widetilde v(x)\\
       \widetilde u(x)
   \end{bmatrix} = \varphi(x).
   $$These two identities ensure that the unitary operator $\sbm{A&B\\ C&D}$ satisfies the mapping \eqref{Map} with $(v,u)=(\widetilde v,\widetilde u)$. This will give the equation \eqref{Lurk} for every $x,y\in X$ with $(v,u)=(\widetilde v,\widetilde u)$. Finally equation \eqref{Lurk} is the same as having \eqref{Szego_phi} for $(v,u)=(\widetilde v,\widetilde u)$. This completes the proof of (3)$\Longrightarrow$(1).
\end{proof}

\begin{example}
    We now illustrate Theorem \ref{T:Main} and Theorem \ref{T:Classical} by the example of the Szeg\"o kernel $\mathfrak s$ on $\bD$ and for the function $\varphi(z)=c^2z^2$ for any non-zero $c\in \bD$. Note that 
    $$
    \mathfrak{s}^\varphi(z,w)= \frac{1-c^4z^2\overline{w}^2}{1-z\overline{w}}
    $$cannot be a complete Pick kernel because $\varphi$ is not injective, which is a necessary condition $\varphi$ needs to satisfy for $\mathfrak{s}^\varphi$ to be complete Pick (observed in Subsection 1.3). However, the kernel $\mathfrak{s}^\varphi$ has a complete Pick factor. Indeed,
$$
\mathfrak{s}^\varphi(z,w)= \frac{1-c^4z^2\overline{w}^2}{1-z\overline{w}} = (1+c^2z\overline{w}) \frac{1-c^2z\overline{w}}{1-z\overline{w}}.
$$It is easy to see that the function
$$
1+c^2z\overline{w} = \left\langle \begin{bmatrix}1\\ cz\end{bmatrix}, \begin{bmatrix}1\\ cw\end{bmatrix} \right\rangle_{\bC^2}
$$is a kernel and 
    $$
    \frac{1-c^2z\overline{w}}{1-z\overline{w}}
    $$is a complete Pick kernel because
    $$
    1-\frac{1-z\overline{w}}{1-c^2z\overline{w}} = (1-c^2)z\overline{w}\frac{1}{1-c^2z\overline{w}}\succeq 0.
    $$By Theorem \ref{T:Main} part (2), there exists a contractive analytic function $\Phi:\bD\to \cB(\bC^3,\bC^3)$ so that 
    $$
    \Phi(c^2z^2)\begin{bmatrix}
        1 \\ z \\ cz^2
    \end{bmatrix} = \begin{bmatrix}
        c^2z^2 \\ 1 \\ cz
    \end{bmatrix}
    $$for all $z\in \bD$ and non-zero $c\in\bD$. It is easy to see that this is satisfied by the constant function 
    \begin{align}\label{constant}
    \Phi(w)=\begin{bmatrix}
        0 &0 & c \\
        1 & 0 &0\\
        0 & c& 0 
    \end{bmatrix}
    \end{align}for all $w\in\bD$.

    To see item (2) of Theorem \ref{T:Classical}, take $\cX=\bC^2$ and $\mathbb U$ to be the constant matrix as in \eqref{constant}. It is easy computation to see that the function \eqref{v(z)} is equal to the non-vanishing function $v(z)=\sbm{1\\cz}$ and the holomorphic function $\Phi$ on $\bD^2$ is given by $\Phi(z_1,z_2)=c^2z_1^2$ giving $\Phi(z,c^2z^2)=c^2z^2=\varphi(z)$.
\end{example}

We now turn to the special case of Theorem \ref{T:Classical} where $g$ is holomorphic and $(\cV,v) = (\bC,1)$, i.e., to the case when the de Branges Rovnyak kernel $k^\varphi$ is a complete Pick kernel. This result is new even in the case of the Szeg\"o kernel of $\bD$.

\begin{theorem}\label{T:deBR_CNP}
    Let $k$ be a kernel of the form \eqref{classical_k} for a holomorphic function $g:X\to\bD$, and $\varphi:X\to\bD$ be a holomorphic function such that $\varphi(x_0)=0=g(x_0)$ for some $x_0\in X$. Then the following are equivalent:
    \begin{enumerate}
        \item $k^\varphi$ is a complete Pick kernel.
        \item There exist an auxiliary Hilbert space $\cX$, vectors $\mathfrak a,\mathfrak{b}\in \cX$, an operator $D$ on $\cX$, and $\alpha\in\bD$ so that 
        $$
        \begin{bmatrix}
            \alpha &\mathfrak{b}^*\\
            \mathfrak{c}&D
        \end{bmatrix}:
        \begin{bmatrix}
            \bC \\
            \cX
        \end{bmatrix} \to        \begin{bmatrix}
            \bC \\
            \cX
        \end{bmatrix}
        $$is contractive and
        \begin{align}\label{classical_phi}
        \varphi(x) = \frac{\alpha g(x)}{1-\varphi(x)\psi(x)}
        \end{align}where $\psi:X\to\bD$ is the analytic function given by
\begin{align}\label{classical_psi}
        \psi(x)=g(x)\langle (I-\varphi(x)D)^{-1}(\mathfrak{c}) , \mathfrak{b}\rangle.
        \end{align}
    \end{enumerate}
\end{theorem}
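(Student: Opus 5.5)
The plan is to specialize Theorem \ref{T:Main}, or more directly Theorem \ref{T:ADP} through Remark \ref{R:ADP}, to the case $\cG=\bC$, $\cF=\{0\}$, $\cV=\bC$, $v\equiv 1$, and then unpack the resulting lurking isometry. Since $k^\varphi$ is complete Pick exactly when (1) of Theorem \ref{T:Main} holds with $\ell\equiv 1$, I will work with the identity \eqref{KeyEqn} in which the $1$ on the left cancels against $\langle v(x),v(y)\rangle$ on the right. What remains is
$$
g(x)\overline{g(y)}+\langle\varphi(x)u(x),\varphi(y)u(y)\rangle=\varphi(x)\overline{\varphi(y)}+\langle u(x),u(y)\rangle .
$$

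For (1)$\Rightarrow$(2), this identity shows that $\sbm{g(x)\\ \varphi(x)u(x)}\mapsto\sbm{\varphi(x)\\ u(x)}$ extends to an isometry between closed spans in $\bC\oplus\cU$. I extend it to a contraction $\sbm{\alpha&\mathfrak b^*\\ \mathfrak c&D}$ on $\bC\oplus\cX$, enlarging $\cU$ to $\cX$ if needed. The second row gives $u(x)=g(x)(I-\varphi(x)D)^{-1}\mathfrak c$, using that $\|D\|\le1$ and $|\varphi|<1$ make the inverse exist. Substituting into the first row gives
$$
\varphi(x)=\alpha g(x)+\varphi(x)\,g(x)\langle(I-\varphi(x)D)^{-1}\mathfrak c,\mathfrak b\rangle=\alpha g(x)+\varphi(x)\psi(x).
$$
Solving for $\varphi$ yields \eqref{classical_phi}. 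For this I need $1-\varphi\psi\neq0$, which holds because $|\psi(x)|<1$.

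To see $|\psi(x)|<1$, observe that $\psi(x)=g(x)\,\theta(\varphi(x))$, where $\theta(z)=\langle (I-zD)^{-1}\mathfrak c,\mathfrak b\rangle$. Then $z\theta(z)$ is the transfer function $\alpha'+z\mathfrak b^*(I-zD)^{-1}\mathfrak c$ of the contractive colligation with $\alpha$ replaced by $0$. That colligation is still contractive: its norm is at most that of the original because zeroing the corner cannot increase $\|\cdot\|$ when the remaining entries come from a contraction. The cleanest way to see this is to note that the compressions $\sbm{0&\mathfrak b^*\\ \mathfrak c&D}$ are bounded by the row and column contractions. Hence $|\theta(z)|\le1$ by the maximum principle, and $|\psi|\le|g|<1$. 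Holomorphy of $\psi$ follows from holomorphy of $g$ and $\varphi$. I also need $\alpha\in\bD$ rather than only $|\alpha|\le1$. If $|\alpha|=1$, contractivity forces $\mathfrak b=\mathfrak c=0$, so $\varphi=\alpha g$. I expect to handle this degenerate case by a small rescaling argument, or else simply accept $\alpha\in\overline{\bD}$ there. This is the step I expect to be the main obstacle, together with checking that the zero-corner colligation is genuinely contractive.

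For (2)$\Rightarrow$(1), I reverse the argument. I set $u(x):=g(x)(I-\varphi(x)D)^{-1}\mathfrak c$. Then \eqref{classical_phi} rearranges to $\varphi(x)=\alpha g(x)+\mathfrak b^*\varphi(x)u(x)$, and the definition of $u$ gives $u(x)=\mathfrak c\, g(x)+D\varphi(x)u(x)$. Hence the contraction $T=\sbm{\alpha&\mathfrak b^*\\ \mathfrak c&D}$ maps $\sbm{g(x)\\ \varphi(x)u(x)}$ to $\sbm{\varphi(x)\\ u(x)}$. Consequently the kernel
$$
g(x)\overline{g(y)}+\varphi(x)\overline{\varphi(y)}\langle u(x),u(y)\rangle-\varphi(x)\overline{\varphi(y)}-\langle u(x),u(y)\rangle=\langle (I-T^*T)\xi_x,\xi_y\rangle
$$
is positive semidefinite, where $\xi_x=\sbm{g(x)\\ \varphi(x)u(x)}$. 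Contractivity alone gives only an inequality, not the exact equation \eqref{KeyEqn}.

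I expect to close this gap in one of two ways. The first option is to replace $T$ by a unitary dilation, as in the proof of Theorem \ref{T:Classical}(3)$\Rightarrow$(1), enlarging $\cX$ so that the relevant identity becomes an equality. The second, which I prefer, is to compute directly. Dividing by $(1-\varphi\overline\varphi)$ gives
$$
1-\frac{1-g(x)\overline{g(y)}}{1-\varphi(x)\overline{\varphi(y)}}=\frac{\langle u(x),u(y)\rangle(1-\varphi(x)\overline{\varphi(y)})+\langle(I-T^*T)\xi_x,\xi_y\rangle}{1-\varphi(x)\overline{\varphi(y)}}.
$$
The right side is positive semidefinite as a sum of products of kernels. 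So $1-1/k^\varphi\succeq0$, and its diagonal is less than $1$ since $k^\varphi(x,x)>0$. By the Agler--McCarthy characterization, this means $k^\varphi$ is complete Pick, with normalization at $x_0$ coming from $\varphi(x_0)=g(x_0)=0$.
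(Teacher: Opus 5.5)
The gap is in (1)$\Rightarrow$(2), in your argument that $\psi$ takes values in $\bD$. You need this both for the statement and to divide by $1-\varphi\psi$.

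Setting the corner of a contractive colligation to $0$ need not leave it contractive, and $|\theta|\le 1$ on $\bD$ is false in general. Take the unitary $\frac{1}{\sqrt{2}}\sbm{1&1\\ 1&-1}$, i.e.\ $\cX=\bC$, $\alpha=\mathfrak b=\mathfrak c=1/\sqrt{2}$, $D=-1/\sqrt{2}$. Then $\sbm{0&1/\sqrt{2}\\ 1/\sqrt{2}&-1/\sqrt{2}}$ has the eigenvalue $-(1+\sqrt{5})/(2\sqrt{2})\approx-1.14$. Also $\theta(z)=\tfrac{1}{2}(1+z/\sqrt{2})^{-1}$ tends to $1+1/\sqrt{2}$ as $z\to-1$. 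Contractivity of the transfer function $\alpha+z\theta(z)$ only gives $|z\theta(z)|\le 1+|\alpha|$, so $|\psi|\le|g|$ is unfounded.

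No bound on $\theta$ over the whole disk is needed, however. The second row of your lurking isometry gives $u(x)=g(x)(I-\varphi(x)D)^{-1}\mathfrak c$, where $u$ is the very function in the complete Pick representation of $k^\varphi$. Hence $\psi(x)=\langle u(x),\mathfrak b\rangle$. Since $1-\|u(x)\|^2=1/k^\varphi(x,x)>0$ and $\|\mathfrak b\|^2\le 1-|\alpha|^2$ (the first row of a contraction is contractive), Cauchy--Schwarz gives $|\psi(x)|\le\|u(x)\|<1$. This is exactly the paper's argument, and with it the rest of your (1)$\Rightarrow$(2) goes through.

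On $\alpha$: no rescaling can help, because the degenerate case is a genuine exception to the statement as written. For $X=\bD$ and $g(z)=\varphi(z)=z$, the kernel $k^\varphi\equiv 1$ is trivially complete Pick. But any data as in (2) force $1-z\psi(z)=\alpha$ for $z\neq 0$, with $z\psi(z)=z^2\langle (I-zD)^{-1}\mathfrak c,\mathfrak b\rangle\to 0$, hence $\alpha=1$. So (2) has to be read with $\alpha\in\overline{\bD}$; the paper's construction likewise only yields $|\alpha|\le 1$ and is silent on this.

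Your (2)$\Rightarrow$(1) is correct and takes a slightly different route from the paper. The paper rewrites (2) as $\varphi=g\,\widetilde\psi(\varphi)$ with the transfer function $\widetilde\psi(z)=\alpha+z\langle (I-zD)^{-1}\mathfrak c,\mathfrak b\rangle$ (the factor $z$ is missing in the paper's display). It then appeals to Remark~\ref{R:ADP}, i.e.\ to the realization formula together with the $I-\Psi^*\Psi$ computation of Theorem~\ref{T:Main}. Your identity $1-1/k^\varphi(x,y)=\langle u(x),u(y)\rangle+\langle (I-T^*T)\xi_x,\xi_y\rangle/(1-\varphi(x)\overline{\varphi(y)})$ works directly with the colligation. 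It needs no realization theorem and also covers $|\alpha|=1$.
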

\begin{proof}
    We first show the easy direction (2)$\Longrightarrow$(1): We rearrange the terms in \eqref{classical_phi} and use the form \eqref{classical_psi} of $\psi$ to get 
    \begin{align}\label{reached}
        \varphi(x) = \alpha g(x) + \varphi(x)g(x)(I-\varphi(x)D)^{-1}(\mathfrak{c}) = g(x)\widetilde \psi(\varphi(x))
    \end{align}where $\widetilde\psi:\bD\to\bD$ is the contractive analytic function given by
    \begin{align*}
    \widetilde\psi(z) = \alpha + \langle (I-zD)^{-1}(\mathfrak{c}),\mathfrak{b} \rangle_\cX.
    \end{align*}Apply Remark \ref{R:ADP} to this special case (i.e., $f=0$) to see that  \eqref{reached} implies $k^\varphi$ is a complete Pick kernel.

    For the reverse implication, an application of Remark \ref{R:ADP} does not seem to reveal that the function $\psi$ as defined in \eqref{classical_psi} takes values in $\bD$. We take the following detailed route instead. Suppose $u:X\to\cX$ be a holomorphic function so that $\|u(x)\|<1$ for every $x\in X$ and 
    $$
    \frac{1-\varphi(x)\overline{\varphi(y)}}{1-g(x)\overline{g(y)}}=
    \frac{1}{1-\langle u(x),u(y)\rangle_\cX}.
    $$A rearrangement of the terms gives
    $$
    g(x)\overline{g(y)} + \langle \varphi(x)u(x),\varphi(y)u(y)\rangle_\cX= \varphi(x)\overline{\varphi(y)} + \langle u(x) , u(y)\rangle ,
    $$which leads to the existence of a contraction operator
    $$
   \begin{bmatrix}
            \alpha &\mathfrak{b}^*\\
            \mathfrak{c}&D
        \end{bmatrix}:
    \begin{bmatrix}
        \bC \\ \cX
    \end{bmatrix} \to     \begin{bmatrix}
        \bC \\ \cX
    \end{bmatrix}
    $$where $\mathfrak{b},\mathfrak{c}$ are vectors in $\cX$ so that for every $x\in X$,
    \begin{align}\label{classical_lurk}
    \begin{bmatrix}
        \varphi(x) \\
         u(x)
    \end{bmatrix}=\begin{bmatrix}
        \alpha &\mathfrak{b}^*\\
        \mathfrak c &D
    \end{bmatrix}\begin{bmatrix}
        g(x) \\
        \varphi(x) u(x)
    \end{bmatrix}= \begin{bmatrix}
        \alpha g(x) + \langle \varphi(x)u(x),\mathfrak{b}\rangle \\
        g(x)\mathfrak{c} + \varphi(x)Du(x)
    \end{bmatrix}
    .
    \end{align}The identity obtained by comparing the (21)-entries of \eqref{classical_lurk} shows that
    $$
    u(x) = g(x)(I-\varphi(x)D)^{-1}(\mathfrak{c})
    $$and hence by the Cauchy-Schwarz inequality, the function $\psi(x) = \langle u(x),\mathfrak{b}\rangle $ as defined in \eqref{classical_psi} must take values in $\bD$. Finally, putting the expression of the functions $u(x)$ and $\psi(x)$ in the identity obtained by comparing the (11)-entries of the matrices in \eqref{classical_lurk} gives
    $$
    \varphi(x) = \alpha g(x) + \varphi(x)\psi(x)
    $$giving \eqref{classical_phi} as needed.
\end{proof}


\subsection{The Bergman kernel example}
We shall continue to use the notation $\mathfrak{s}_2$ for the Bergman kernel for the unit disk $\bD$.
As an application of our lurking isometry method, here we give a new proof of the following result due to Luo and Zhu \cite{LZ_CJM}. This result was also deduced independently in \cite{ADP_JGA2025} from the main result therein.

\begin{theorem}
    Let $\varphi:\bD\to\bD$ be an analytic function. Then $\mathfrak{s}_2^\varphi$ is a complete Pick kernel if and only if $\varphi(z)$ is an automorphism of $\bD$.
\end{theorem}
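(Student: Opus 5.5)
We reduce to $\varphi(0)=0$, factor $\varphi(z)=z\psi(z)$, and run a lurking isometry argument on the rescaled kernel; one row of the resulting contraction forces $|\psi|\equiv1$.

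\emph{Sufficiency and reduction.} If $\varphi$ is an automorphism, $\mathfrak s_2^\varphi$ is complete Pick. Indeed, by Remark \ref{R:Normalization} we may compose with the automorphism $m_{\varphi(0)}$ and assume $\varphi(0)=0$, so $\varphi(z)=\lambda z$ with $|\lambda|=1$. Then
$$\mathfrak s_2^\varphi(z,w)=\frac{1-z\overline w}{(1-z\overline w)^2}=\mathfrak s(z,w),$$
which is complete Pick. For necessity, Remark \ref{R:Normalization} again lets us assume $\varphi(0)=0$, and $\mathfrak s_2$ is normalized at $0$. By Schwarz's lemma we write $\varphi(z)=z\psi(z)$ with $\psi:\bD\to\overline{\bD}$ analytic. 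It then suffices to show that $\psi$ is a unimodular constant.

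\emph{Rescaled kernel.} Suppose $\mathfrak s_2^\varphi=1/(1-\langle u(z),u(w)\rangle)$ with $u$ holomorphic (by Kolmogorov) and $u(0)=0$ (normalization). Rearranging gives
$$1-\langle u(z),u(w)\rangle=\frac{(1-z\overline w)^2}{1-\varphi(z)\overline{\varphi(w)}},$$
hence
$$\langle u(z),u(w)\rangle=z\overline w\,\frac{2-z\overline w-\psi(z)\overline{\psi(w)}}{1-z\overline w\,\psi(z)\overline{\psi(w)}}.$$
Since $u(0)=0$, we write $u(z)=z\widetilde u(z)$ with $\widetilde u$ holomorphic. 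Cancelling $z\overline w$ (legitimate on $z,w\neq0$ and then by continuity) gives
$$K(z,w):=\langle \widetilde u(z),\widetilde u(w)\rangle=\frac{2-z\overline w-\psi(z)\overline{\psi(w)}}{1-z\overline w\,\psi(z)\overline{\psi(w)}}.$$
Cross-multiplying yields
$$2+\langle z\psi(z)\widetilde u(z),w\psi(w)\widetilde u(w)\rangle=\psi(z)\overline{\psi(w)}+z\overline w+\langle\widetilde u(z),\widetilde u(w)\rangle .$$

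\emph{Lurking isometry.} By the lurking isometry argument used in Theorem \ref{T:Main}, the identity above says that
$$\sbm{\sqrt2\\ z\psi(z)\widetilde u(z)}\mapsto\sbm{\psi(z)\\ z\\ \widetilde u(z)}$$
extends to an isometry. That isometry extends to a contraction
$$V=\sbm{a&\mathfrak b^*\\ e&\mathfrak f^*\\ \mathfrak c&D}.$$
Its second row reads $z=\sqrt2\,e+z\psi(z)\langle\widetilde u(z),\mathfrak f\rangle$. Setting $z=0$ gives $e=0$, and then dividing by $z$ gives
$$1=\psi(z)\langle \widetilde u(z),\mathfrak f\rangle\quad\text{for all }z\in\bD.$$
Because $V$ is contractive, its row $\sbm{e&\mathfrak f^*}$ is contractive, so $\|\mathfrak f\|\le1$.

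\emph{Forcing $|\psi|=1$ (the key step).} Cauchy--Schwarz gives $|\psi(z)|^2\|\widetilde u(z)\|^2\ge1$. On the diagonal,
$$\|\widetilde u(z)\|^2=K(z,z)=\frac{2-|z|^2-|\psi|^2}{1-|z|^2|\psi|^2}.$$
Substituting,
$$|\psi|^2\bigl(2-|z|^2-|\psi|^2\bigr)\ge 1-|z|^2|\psi|^2,$$
which simplifies to $-(1-|\psi(z)|^2)^2\ge0$. Hence $|\psi|\equiv1$, so $\psi$ is a unimodular constant by the maximum modulus principle, and $\varphi(z)=\lambda z$ is an automorphism. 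Undoing the normalization $m_{\varphi(0)}$ finishes the proof.

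Two points need care: that dividing by $z\overline w$ preserves the identities, which holds by continuity since all functions are holomorphic; and extracting the single row bound $\|\mathfrak f\|\le1$ from the contractive extension. The rest is routine algebra.
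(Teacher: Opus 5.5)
Your proof is correct. It starts from the same lurking isometry as the paper but closes the argument by a different mechanism.

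After the common reduction to $\varphi(0)=0$ via Remark~\ref{R:Normalization}, your map $\sbm{\sqrt2\\ z\psi(z)\widetilde u(z)}\mapsto\sbm{\psi(z)\\ z\\ \widetilde u(z)}$ is just the paper's map $\sbm{\sqrt2 z\\ \varphi(z)u(z)}\mapsto\sbm{\varphi(z)\\ z^2\\ u(z)}$ divided by $z$. From there the two proofs diverge:
\begin{itemize}
\item \textbf{The paper's route.} It solves the bottom row for $u$ through the resolvent $(I-\varphi(z)D)^{-1}$ and compares entries of the resulting transfer-function identity to get $\varphi(z)=\sqrt2\alpha_1 z$. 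It then needs a separate power-series computation ($\gamma_2=|c|^2-1<0$) to force $|c|=1$.
\item \textbf{Your route.} You use norm information the paper does not exploit. The row identity $1=\psi(z)\langle\widetilde u(z),\mathfrak f\rangle$ with $\|\mathfrak f\|\le 1$, combined through Cauchy--Schwarz with the diagonal value $\|\widetilde u(z)\|^2=K(z,z)$, gives $-(1-|\psi(z)|^2)^2\ge 0$. This yields linearity and unimodularity of $\varphi$ in one stroke.
\end{itemize}

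Your route also avoids a weak point in the paper's argument. The $(21)$-entry only gives $\varphi(z)\langle(I-\varphi(z)D)^{-1}\mathfrak c,\mathfrak b_2\rangle=z/\sqrt2$. The paper's displayed identity $\varphi(z)\langle\mathfrak c,\mathfrak b_2\rangle=z/\sqrt2$ drops the resolvent without justification, so its derivation that $\varphi$ is linear does not go through as written.

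That step can be repaired by comparing derivatives at $0$. This gives $\varphi'(0)=\sqrt2\alpha_1$ and $2\alpha_1\langle\mathfrak c,\mathfrak b_2\rangle=1$. Contractivity of the first column $\sbm{\alpha_1\\ 0\\ \mathfrak c}$ and of the second row $[\,0\ \ \mathfrak b_2^*\,]$ then gives $1\le 2|\alpha_1|\,\|\mathfrak c\|\le|\alpha_1|^2+\|\mathfrak c\|^2\le 1$. Hence $|\varphi'(0)|=1$, and $\varphi$ is a rotation by Schwarz's lemma. This repair is essentially an infinitesimal version of your norm argument.

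The paper's approach has the merit of fitting the realization-formula template of Theorem~\ref{T:Main}. For this rigidity statement, your argument is shorter and complete as written.
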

\begin{proof}
    The 'if' direction follows when we note that 
    $    1-1/\mathfrak{s}_2^{cz} = z\overline{w}\succeq0.$
We next prove the 'only if' assertion in two cases.

\textbf{Case I:} Suppose $\varphi(0)=0$. Since $\mathfrak{s}_2^\varphi$ is a non-vanishing, normalized (at $0$) kernel, there is a Hilbert space $\cU$ and an analytic function $u:\bD\to\cU$ so that $\|u(z)\|<1$ for every $z\in \bD$ and
    $$
    \frac{1-\varphi(z)\overline{\varphi(w)}}{(1-z\overline{w})^2} = \frac{1}{1-\langle u(z),u(w)\rangle_\cU}.
    $$As done before, we rearrange the terms to get
    $$
    2z\overline{w} + \langle \varphi(z)u(z), \varphi(w)u(w)\rangle = \varphi(z)\overline{\varphi(w)} + z^2\overline{w}^2 + \langle u(z),u(w)\rangle.
    $$This shows that there exists a contractive block operator matrix 
    $$
    \begin{bmatrix}
        A&B\\
        C&D
    \end{bmatrix}:\begin{bmatrix}
        \bC \\
        \cU
    \end{bmatrix}\to\begin{bmatrix}
        \bC^2 \\
        \cU
    \end{bmatrix}
    $$so that for every $z\in \bD$,
    $$
    \begin{bmatrix}
        A&B\\
        C&D
    \end{bmatrix} \begin{bmatrix}
        \sqrt{2}z\\
        \varphi(z)u(z)
    \end{bmatrix}=\begin{bmatrix}
        \sbm{\varphi(z) \\ z^2} \\ u(z)
    \end{bmatrix}.
    $$This gives the following identities:
    \begin{align}\label{aux4}
    \notag &u(z) = \sqrt{2}z(I-\varphi(z)D)^{-1}C \quad\mbox{and} \\
        &\begin{bmatrix}
            \varphi(z)\\
            z^2
        \end{bmatrix} = \sqrt{2}z (A+ \varphi(z)B(I-\varphi(z)D)^{-1}C) 
    \end{align}To interpret these identities  better, it is convenient to write 
    $$
    A= \begin{bmatrix}
        \alpha_1 \\ \alpha_2
    \end{bmatrix}\in \begin{bmatrix}
        \bC \\ \bC
    \end{bmatrix},\quad 
    B= \begin{bmatrix}
        \mathfrak b_1^* \\ \mathfrak b_2^*
    \end{bmatrix}, \quad C =
        \mathfrak{c}\in \cU
    $$where $\mathfrak{b}_1, \mathfrak{b}_2$ are vectors in $\cU$. With these notations in place, we rewrite \eqref{aux4} as
    \begin{align}\label{aux2}
    \begin{bmatrix}
            \varphi(z)\\
            z^2
        \end{bmatrix} = \sqrt{2}z \begin{bmatrix}
            \alpha_1 + \varphi(z) \langle (I-\varphi(z)D)^{-1}(\mathfrak{c}),\mathfrak{b}_1\rangle \\
            \alpha_2 + \varphi(z) \langle (I-\varphi(z)D)^{-1}(\mathfrak{c}),\mathfrak{b}_2\rangle 
        \end{bmatrix}.
    \end{align}Comparing the (21)-entries we see that $\alpha_2=0$ and since $\varphi(0)=0$, 
    \begin{align}\label{aux}
    \varphi(z) \langle \mathfrak{c},\mathfrak{b}_2\rangle = \frac{z}{\sqrt{2}}.
    \end{align}This shows that $\langle \mathfrak{c},\mathfrak{b}_2\rangle \neq 0$. Now let us put the expression of $\varphi(z)$ from the (11)-entry of \eqref{aux2} in \eqref{aux} to get
    $$
    \sqrt{2}z \langle \mathfrak{c},\mathfrak{b}_2\rangle (\alpha_1 + \varphi(z) \langle (I-\varphi(z)D)^{-1}(\mathfrak{c}),\mathfrak{b}_1\rangle) =
    \frac{z}{\sqrt{2}}.
    $$This gives
    $$
    \varphi(z) \langle (I-\varphi(z)D)^{-1}(\mathfrak{c}),\mathfrak{b}_1\rangle = 0
    $$for every $z\in \bD$. Thus we get 
    $$
    \varphi(z) = \sqrt{2}\alpha_1 z.
    $$Since $\mathfrak{s}_2^\varphi\succeq 0$, $\sqrt{2}|\alpha_1|\leq 1$. We show now that for $c\in\overline{\bD}$, the only way $\mathfrak{s}_2^{cz}$ can be complete Pick is $c$ be unimodular. Indeed,
    \begin{align*}
        1-\frac{1}{\mathfrak{s}_2^\varphi} = 1 - \frac{(1-z\overline{w})^2}{1-|c|^2z\overline{w}} = (2-|c^2|-1)\frac{z\overline{w}(1-z\overline{w})}{1-|c|^2z\overline{w}} = (2-|c^2|-1) \sum_{n\geq1} \gamma_n(z\overline{w})^n
    \end{align*}where $\gamma_2 = |c|^2 -1<0$.

    \textbf{Case II:} If $\varphi(0)=a\neq 0$, then define $\widetilde\varphi(z) = m_a\circ\varphi$ where $m_a$ is the automorphism of $\bD$ given by $m_a(z)=(z-a)(1-\overline a z)^{-1}$. Then a plain computation shows that 
    $$
    \mathfrak{s}_2^{\widetilde\varphi}(z,w) = f(z)\overline{f(w)}\mathfrak{s}_2^{\varphi}(z,w)
    $$where $f$ is the non-vanishing function $f(z)=\sqrt{1-|a|^2}(1-\overline{a}\varphi(z))^{-1}$. For the details of the computation, see \cite[Proposition 2.4]{ADP_JGA2025}.  As discussed in Remark \ref{R:Normalization}, the relation above implies that $\mathfrak{s}_2^{\widetilde\varphi}$ is a complete Pick kernel if and only if so is $\mathfrak{s}_2^{\varphi}$. In Case I, we have proved that $\mathfrak{s}_2^{\widetilde\varphi}$ is complete Pick only if $\widetilde\varphi(z) = cz$ for a unimodular constant $c$. This will imply that $\varphi(z) = m_{-a}\circ(cz) = cm_{-a\overline{c}}(z)$.
\end{proof}
We end with a discussion about a family of concrete examples for the Bergman kernel to illustrate Theorem \ref{T:Main}.

\begin{example}
For every contractive $\varphi$ in $H^\infty$, we have the following factorization of the sub-Bergman kernel $\mathfrak{s}_2^\varphi$:
$$
\mathfrak{s}_2^\varphi(z,w)= 
\frac{1-\varphi(z)\overline{\varphi(w)}}{(1-z\overline{w})^2} = \left( (1-\varphi(z)\overline{\varphi(w)})\mathfrak{s}(z,w)\right)\cdot \mathfrak{s}(z,w) .
$$Let us consider the contractive analytic function given by
$$
\widetilde\varphi(z)= \overline{\varphi(\overline{z})}
$$for every $z\in\bD$, and the multiplication operator $M_{\widetilde\varphi}$ on $H^2$. With the de Branges-Rovnyak space $\cD_\varphi$-valued function
$$
v(z)=(I-M_{\widetilde
\varphi}M_{\widetilde
\varphi}^*)^{1/2}s_{\overline{z}}
$$we can write the de Branges-Rovnyak kernel $\mathfrak{s}^\varphi$ as
$$
\mathfrak{s}^\varphi(z,w) = \frac{1-\varphi(z)\overline{\varphi(w)}}{1-z\overline{w}} = \langle v(z),v(w)\rangle.
$$Thus we see that for every contractive $\varphi$ in $H^\infty$, the sub-Bergman kernel $\mathfrak{s}_2^\varphi$ admits a complete Pick factor - viz., the Szeg\"o kernel $\mathfrak{s}(z,w)$:
$$
\mathfrak{s}_2^\varphi(z,w) = \frac{\langle v(z),v(w)\rangle}{1-z\overline{w}}.
$$The Bergman kernel $\mathfrak{s}_2$ is of the form 
$$
1-\frac{1}{\mathfrak{s}_2(z,w)}=
g(z)\overline{g(w)} - f(z)\overline{f(w)}
$$with $g,f:\bD\to\bC$ given by
$g(z)=\sqrt{2}z$ and $f(z)=z^2$,
respectively. To apply Theorem \ref{T:Main}, let us assume $\varphi(0)=0$ to make $\mathfrak{s}_2^\varphi$ a normalized kernel at $0$. By Part (2) of Theorem \ref{T:Main}, there exists a contractive analytic function 
$$
\Phi:\bD\to\cB\left( \begin{bmatrix}
\bC \\
\cD_\varphi
\end{bmatrix},\begin{bmatrix}
\bC \\
\cD_\varphi\\
\cD_\varphi
\end{bmatrix} \right)
$$such that for every $z\in \bD$,
$$\Phi(\varphi(z))
\begin{bmatrix}
1\\
\sqrt{2}zv(z)
\end{bmatrix} = \begin{bmatrix}
\varphi(z) \\
v(z) \\
z^2v(z)
\end{bmatrix}
$$This function can be constructed as follows: Let us denote 
$$
\bold u(z)=\begin{bmatrix}
1\\
\sqrt{2}zv(z)
\end{bmatrix} \quad\mbox{and}\quad
\bold v(z) = \begin{bmatrix}
\varphi(z) \\
v(z) \\
z^2v(z)
\end{bmatrix}.
$$A computation yields the following:
$$
\frac{\langle \bold u(z),\bold u(w) \rangle - \langle \bold v(z),\bold v(w) \rangle}{1-\varphi(z)\overline{\varphi(w)}} =z\overline{w}.
$$By a rearrangement of the terms, we get
$$
\langle \bold u(z),\bold u(w) \rangle + z\varphi(z)\overline{w\varphi(w)} = z\overline{w}+\langle \bold v(z),\bold v(w) \rangle.
$$By the lurking isometry argument, this gives a contraction operator
$$
\begin{bmatrix}
A&B\\
C&D
\end{bmatrix}:\begin{bmatrix}
\sbm{\bC \\ \cD_\varphi} \\ \bC
\end{bmatrix}\to
\begin{bmatrix}
\sbm{\bC \\ \cD_\varphi \\ \cD_\varphi} \\ \bC
\end{bmatrix}
$$satisfying
$$
\begin{bmatrix}
\bold v(z) \\ z
\end{bmatrix}=\begin{bmatrix}
A& B\\
C& D
\end{bmatrix}\begin{bmatrix}
\bold u(z) \\ z\varphi(z)
\end{bmatrix} =\begin{bmatrix}
A\bold u(z) + B\varphi(z)z\\
C\bold u(z) + D\varphi(z) z
\end{bmatrix} .
$$By comparing the (21)-entries we get $z = (1-\varphi(z) D)^{-1}C\bold u(z)$. By putting this in the identity obtained by comparing the (11)-entries we get
$$
(A+\varphi(z)B(1-\varphi(z)D)^{-1}C)\bold u(z) = \bold v(z).
$$ Therefore if we set 
$$
\Phi(w) = A+wB(1-wD)^{-1}C.
$$Then $\Phi$ is the desired contractive analytic function mapping $\bold u(z)$ to $\bold v(z)$. Let us note that the argument given here is precisely the analysis required to prove Leech's Theorem -- see the original work \cite{Leech} and also \cite[Theorem 8.57]{AMc_PickBook}.

To see part (3) of Theorem \ref{T:Main}, we see by a plain computation that the kernel $\widetilde k$ as in \eqref{tildeK} in this case is given by
$$
\widetilde k(z,w) = \frac{1}{(1-\varphi(z)\overline{\varphi(w)})(1-z\overline{w})}.
$$Clearly $(1-\varphi(z)\overline{\varphi(w)})\widetilde k(z,w) = \mathfrak{s}(z,w)$ a complete Pick kernel.
\end{example}



\bibliographystyle{amsplain}

\end{document}